\newtheorem{thm}{Theorem}[section]
\newtheorem{cor}[thm]{Corollary}
\newtheorem{defn}[thm]{Definition}
\newtheorem{lem}[thm]{Lemma}
\newtheorem{prop}[thm]{Proposition}
\newtheorem{rem}[thm]{Remark}
\numberwithin{equation}{section}
\newcommand{\bQ}{{\mathbb{Q}}}
\newcommand{\bR}{{\mathbb{R}}}
\newcommand{\bZ}{{\mathbb{Z}}}
\newcommand{\bF}{{\mathbb{F}}}
  \newcommand{\C}{{\mathcal{C}}}
  \newcommand{\M}{{\mathcal{M}}}
  \newcommand{\N}{{\mathcal{N}}}
  \newcommand{\R}{{\mathcal{R}}}
  \newcommand{\U}{{\mathcal{U}}}
  \newcommand{\X}{{\mathcal{X}}}
\newcommand{\rank}{\operatorname{rank}}
\begin{document}

\title[Rigidity of frameworks supported on surfaces]{Rigidity of frameworks supported on surfaces}
\author[A. Nixon]{A. Nixon}
\address{Dept.\ Math.\ Stats.\\ Lancaster University\\
Lancaster LA1 4YF \\U.K. }
\email{a.nixon1@lancaster.ac.uk}
\thanks{First author supported by  EPSRC grant EP/P503825/1}
\author[J.C. Owen]{J.C. Owen}
\address{D-Cubed, Siemens PLM Software, Park House,\\
Castle Park, Cambridge UK}
\email{owen.john.ext@siemens.com}
\author[S.C. Power]{S.C.  Power}
\address{Dept.\ Math.\ Stats.\\ Lancaster University\\
Lancaster LA1 4YF \\U.K. }
\email{s.power@lancaster.ac.uk}


\thanks{2000 {\it  Mathematics Subject Classification.}
52C25, 05B35, 05C10 \\
Key words and phrases:
bar-joint framework, framework on a surface, rigid framework}

\begin{abstract}
A theorem of Laman gives a combinatorial characterisation of the graphs that
admit a realisation as a minimally rigid generic bar-joint framework in $\bR^2$.
A more general theory is developed
for frameworks in $\bR^3$ whose vertices are constrained to move on
a two-dimensional smooth submanifold $\M$. Furthermore,
when $\M$ is a union of concentric spheres, or a
union of parallel planes or a union of concentric
cylinders, necessary and sufficient
combinatorial conditions are obtained for the minimal rigidity of generic
frameworks.
\end{abstract}

\date{}
\maketitle


\section{Introduction}
\label{paper1sec1}

A bar-joint framework realisation of a simple finite connected graph
$G=(V,E)$ is a pair $(G,p)$ where
$p=(p_1,\dots ,p_n)$ is
an assignment of the vertices $v_1, \dots ,v_n$ in $V$ to framework points in $\bR^d$. In the case of frameworks in the plane, there is a celebrated characterisation
of those graphs $G$ whose typical frameworks  are both rigid and minimally rigid.
By \textit{rigid} we mean that any edge-length-preserving motion
 is necessarily a rigid motion. That is, a continuous edge-length-preserving path
$p(t), t \in [0,1]$, with $p(0)=p$,  is necessarily induced by a continuous path
of isometries of $\bR^d$.
 The function $p(t)$ is known as a \textit{continuous flex} of the framework
$(G,p)$ and \textit{minimal rigidity} means that the framework is rigid with the
removal of any framework edge resulting in a nonrigid framework.

In the following theorem{, due to Laman \cite{Lam}} the term
\textit{generic}  means that the framework coordinates of $(G,p)$, of which there are
$2|V|$ in number, are algebraically independent over $\bQ$.
This is one way of formalising the notion of a ``typical'' framework for $G$.

\begin{thm} A finite connected simple graph $G=(V,E)$ {with $|V|\geq 2$} admits a minimally
rigid generic realisation $(G,p)$ in $\bR^2$ if and only if
\begin{enumerate}
\item[(i)] $2|V|-|E| = 3$ and
\item[(ii)] $2|V'|-|E'|  \geq 3$ for every subgraph $G'=(V',E')$ with $|E'|>1$.
\end{enumerate}
Moreover every generic realisation $(G,p)$ of such a graph is minimally rigid.
\end{thm}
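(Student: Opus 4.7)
The plan is to pass from the continuous notion of rigidity to the linear-algebraic notion of infinitesimal rigidity, and then to derive the combinatorial characterisation via a Henneberg-style inductive construction of the graph. By the Asimow--Roth theorem, continuous rigidity of $(G,p)$ at a generic $p$ is equivalent to infinitesimal rigidity, which is controlled by the $|E|\times 2|V|$ rigidity matrix $R(G,p)$ whose row for an edge $v_iv_j$ carries $p_i-p_j$ in the columns of $v_i$, $p_j-p_i$ in the columns of $v_j$, and zeros elsewhere. Infinitesimal rigidity in $\bR^2$ amounts to $\rank R(G,p)=2|V|-3$, and since this rank is constant on the generic set, both rigidity and minimal rigidity are properties of $G$ alone; this yields at once the final sentence of the theorem and reduces matters to characterising combinatorially those $G$ for which $R(G,p)$ has $2|V|-3$ linearly independent rows at generic $p$.

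For necessity: if $(G,p)$ is generically minimally rigid then the rows of $R(G,p)$ are independent and span a space of dimension $2|V|-3$, so $|E|=2|V|-3$, which is (i). For any subgraph $G'=(V',E')$ the submatrix of $R(G,p)$ indexed by $E'$ still has independent rows, but its nonzero columns lie in the $2|V'|$ columns of $V'$ and its row space has trivial intersection with the $3$-dimensional space of infinitesimal rigid motions of $V'$, so $|E'|\le 2|V'|-3$, giving (ii).

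For sufficiency one proceeds by induction on $|V|$. Call a graph satisfying (i) and (ii) a \emph{Laman graph} and introduce the two Henneberg moves: (H1) adds a new vertex of degree $2$ joined to two existing vertices; (H2) deletes an edge $uw$ and adds a new vertex $v$ of degree $3$ joined to $u$, $w$ and a further existing vertex $x$. Both moves preserve the Laman conditions. The combinatorial core is the reverse statement: every Laman graph on at least three vertices admits a reduction by either inverse H1 or inverse H2, i.e.\ contains either a degree-$2$ vertex or a degree-$3$ vertex two of whose neighbours can be joined by a new edge without violating (ii). This is forced by the sparsity count, since the average degree is strictly less than $4$, and the existence of a valid neighbour pair for the inverse of H2 is supplied by a pebble-game argument on the $(2,3)$-sparse matroid.

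The principal geometric obstacle is to show that H1 and H2 preserve generic infinitesimal rigidity. H1 is routine: a new vertex placed at a generic point contributes two rows to $R$ that are independent of each other and of the existing rows, so the rank rises by exactly $2$ while the vertex count rises by $1$. The H2 step is delicate because the row for $uw$ is deleted and three new rows are added, and one must verify that the rank rises by exactly $3$. The standard device is to place $v$ at the special non-generic point on the line through $u$ and $w$; at this configuration the new rows for $uv$ and $vw$ linearly span the old row for $uw$, and one needs only that the new row for $vx$ contributes one further dimension, which is a direct linear computation. The rigidity matrix of the enlarged framework therefore attains the target rank $2(|V|+1)-3$ at this specially chosen point, and upper semicontinuity of matrix rank on the space of placements of $v$ gives maximality on a Zariski-open, hence generic, subset. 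Combining the base case $G=K_2$ with the two inductive steps yields generic minimal rigidity for every Laman graph, and completes the proof.
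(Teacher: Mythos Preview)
Your proposal is correct and follows essentially the same route as the paper: Henneberg induction from $K_2$ (the paper's Proposition~\ref{p:lamangraph} and Lemma~\ref{l:constraintgraph1}) together with the specialisation argument for the Henneberg~2 step (the paper's Proposition~\ref{p:hen2isostatic}, which places the new vertex on the segment $[p_1,p_2]$ and argues via an infinitesimal flex of $(G',p')$ restricting to a rigid-motion flex of $(G,p)$, dual to your row-rank formulation). The one place where you are sketchier than the paper is the reverse Henneberg~2 lemma, which you delegate to a pebble-game black box; the paper instead gives a self-contained freedom-counting proof (Lemma~\ref{l:constraintgraph1}), showing by a short case analysis on the intersections of hypothetical blocking subgraphs $H_{xy},H_{yz},H_{xz}$ that some neighbour pair of a degree-$3$ vertex can always be rejoined without violating sparsity.
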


There is a well-known notion of \textit{infinitesimal
rigidity},
which coincides with rigidity in the case of generic frameworks. See Gluck \cite{Glu}
and Asimow and Roth \cite{A&R} for example. However
frameworks may be infinitesimally
flexible while also being (continuously) rigid so this is a stronger notion.
The theorem above is due to Laman \cite{Lam} in its infinitesimally rigid formulation.

A graph  satisfying  (ii) above is said to be an \textit{independent graph} for the plane,
or simply an independent graph when the context is understood.
The terminology here relates to
the connections between the rigidity of geometric framework structures and
the theory of matroids. We shall not need these connections which may be found, for example,   in Graver, Servatius and Servatius
 \cite{GSS}, Jackson and Jordan \cite{J&J}, \cite{Jac} and Whiteley \cite{Whi5}. When both (i) and (ii) hold then $G$ is said to be a {maximally independent graph} for the plane.
 { These graphs are also referred to} as {Laman graphs} or \textit{$(2,3)$-tight graphs}.

In what follows we analyse frameworks
$(G,p)$ supported on general smooth surfaces $\M$ embedded in $\bR^3$.
In particular  in Section \ref{sec:fonsurface},
we define continuous and infinitesimal rigidity and show that these
notions are equivalent for completely regular frameworks in the sense of Definition \ref{d:seqreg}. Also
we define the ambient degrees of freedom of a framework on a surface $\M$ and obtain necessary counting conditions for minimally rigid completely regular realisations.
The development here is in  the spirit of the
well-known characterisations of rigidity for free frameworks
given  by Asimow and Roth \cite{A&R}, \cite{Rot}, \cite{A&R2}, where regular frameworks were identified as the appropriate topologically generic notion.
The primary construct in rigidity theory is the rigidity matrix and for a framework $(G,p)$ on $\M$ we form a relative rigidity matrix $R_{\M}(G,p)$, with
$|E|+|V|$ rows and $3|V|$ columns, which incorporates the local
 { normal} vectors for $\M$ at the framework points. While we restrict
attention to embedded surfaces in $\bR^3$ there are
 {natural variants of these connections} in higher dimensions, as is also the case in Asimow and Roth \cite{A&R}.

In Section \ref{sec:henmconf}, we pay particular attention to the construction of  Henneberg moves between frameworks (rather than graphs) which
preserve minimal rigidity. These constructions together with the
graph theory of Section \ref{sec:grapht} are the {central} ingredients in the proof of the main result, Theorem \ref{t:lamancylinder}. This shows that there is a precise version of Laman's theorem for frameworks on a  circular cylinder with the class of $(2,2)$-tight graphs (see Definition \ref{d:indptgraphs}) playing the appropriate role.

The approach below embraces reducible surfaces and varieties and
we { also} obtain variants of Laman's theorem for frameworks supported on parallel planes, on concentric spheres and on concentric cylinders. As a direct corollary of this for the spheres and planes cases we recover  some results of Whiteley \cite{Whi2}  on the rigidity of cone frameworks in $\bR^3$. On the other hand  from the { concentric} cylinders case we deduce a novel variant for point-line frameworks in $\bR^3$
 { which have} a single line.

{We begin with some pure graph theory for $(2,3)$-tight and $(2,2)$-tight graphs
and show that, with the exception of the singleton graph $K_1$ each $(2,2)$-tight graph is generated from $K_4$ by the usual Henneberg moves together with the new extension move, as given in Definition \ref{d:extensionmove}. The graph extension move is similar in spirit to the $2$-sum move used by Berg and Jordan \cite{B&J} (along with the Henneberg $2$ move) to generate all circuits for the generic rigidity matroid in two dimensions.
However the $2$-sum move, combined with the Henneberg moves, is not sufficient for our purposes. This is revealed by the $(2,2)$-tight graph formed by two copies of $K_4$ sharing a single vertex.}

{
There are spanning tree characterisations of $(2,3)$-tight and $(2,2)$-tight multi-graphs which derive from a celebrated combinatorial result of Nash-Williams  \cite{N-W}, \cite{Tut}, \cite{L&Y}, and such equivalences have proven useful in rigidity theory for the locally flat
contexts, where multi-graphs play a role. Although we do not need the spanning tree viewpoint we nevertheless derive a spanning tree characterisation for $(2,2)$-tight graphs which are simple, Theorem \ref{t:nash-williams2}.
}

In all cases we are concerned with the usual Euclidean distance in $\bR^3$ rather than surface geodesics or other distance measures.
We note that Whiteley \cite{Whi5} and  Saliola and Whiteley \cite{S&W}  examine first order rigidity for spherical spaces
and various spaces where there is local flatness.  (See also Connelly and Whiteley
\cite{C&W} for global rigidity concerns and Schulze and Whiteley \cite{S&W} for transfer of metric results.) For the sphere
there is an equivalence between the direct distance and geodesic distance viewpoints which may be exploited. However this is a {very} special case and in general one must take account of curvature and local geometry.
Thus on the flat cylinder, derived
from $\bR^2/\bZ$ and direct distance in $\bR^2$,
a generic $K_4$ framework with no
wrap-around edges has three (independent) infinitesimal motions, while  a typical $K_4$
framework on the classical curved cylinder has only two.

The topic of frameworks constrained to surfaces is developed further in the sequel
\cite{NOP2} where a combinatorial characterisation has been obtained for minimally rigid
bar-joint framework on surfaces invariant under a \textit{singly} generated isometry group. These so-called type $1$ surfaces include the standard cone and torus, as well as surfaces of revolution and helicoids.

\section{Graph Theory}
\label{sec:grapht}

The \textit{Henneberg 2 move} (see e.g \cite{GSS})
 is an operation $G \to G'$ on
simple connected graphs
in which a new vertex of degree 3 is introduced by breaking an edge $v_iv_j$
into two edges $v_iw, v_jw$ at a
new vertex $w$ and adding an edge $wv_k$ to some other vertex $v_k$ of $G$.
The operation maps the set of  independent graphs (for the plane) to itself and also
preserves maximal independence. A key step in the standard proof of Laman's theorem
is to show that if the independent graph $G$ has a minimally rigid generic framework
realisation then so too does $G'$, and in Section \ref{sec:henmconf} we pursue this
in wider generality for Henneberg moves on  \textit{frameworks}
on smooth surfaces. However, for such rigidity preservation arguments to be sufficient to characterise minimal rigidity we also need to know that the
desired class of graphs can be derived inductively by such tractable moves or related moves. This is a purely
graph theoretical issue  and we now address this for Laman graphs (see Definition \ref{d:indptgraphs}(a)), Laman-plus-one
graphs (Definition \ref{def:plusone}) and $(2,2)$-tight graphs (Definition \ref{d:indptgraphs}(b)).

A \textit{Henneberg  1  move} (see e.g. \cite{GSS}) or vertex
addition move $G \to G'$ is the process of adding a degree two vertex with
two new  edges which are incident to any two distinct points of $G$.

\begin{prop}\label{p:lamangraph} Every Laman graph $G$ arises from a sequence
\[
G_0 \to G_1 \to \dots \to G_n=G
\]
where $G_0 = K_2$, the complete graph on two vertices and
where $G_k\to G_{k+1}$ is either a Henneberg $1$ move or a Henneberg $2$ move.
\end{prop}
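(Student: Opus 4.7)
The plan is induction on $n=|V|$. The base case $n=2$ is immediate since $G=K_2$ itself. For the inductive step, I start from the global count $2|V|-|E|=3$, which gives $\sum_{v\in V} \deg(v) = 2|E| = 4|V|-6$. Since no Laman graph on $|V|\ge 2$ vertices has an isolated vertex (the subgraph condition forces $|E|\ge 1$ at each non-trivial vertex), and since the average degree is strictly less than $4$, there must exist a vertex $v$ of degree $2$ or $3$. I would split the argument into these two cases.

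If $\deg(v)=2$, let $G' = G - v$. Then $|V(G')|=|V|-1$ and $|E(G')|=|E|-2$, so $2|V(G')|-|E(G')| = 2|V|-|E| = 3$. Any subgraph of $G'$ is a subgraph of $G$, so the inequality condition is inherited; hence $G'$ is a Laman graph. By the inductive hypothesis $G'$ arises from $K_2$ by a sequence of Henneberg moves, and appending a Henneberg $1$ move restoring $v$ recovers $G$.

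The substantive case is $\deg(v)=3$, where I attempt an inverse Henneberg $2$ move. Let $N(v)=\{a,b,c\}$. Form $G'$ by deleting $v$ and adjoining one new edge chosen from $\{ab, bc, ac\}$ (choosing a non-edge of $G-v$; if all three are edges, one still chooses any replacement that yields a simple graph, but I would first dispose of that case by deleting $v$ and one of the three triangle edges, noting $2|V'|-|E'|=3$ still holds). Then $|V(G')|=|V|-1$, $|E(G')|=|E|-2$, so again the global count $2|V(G')|-|E(G')|=3$ holds. The only possible failure of the subgraph inequality for $G'$ involves a subgraph containing the new edge, since every other subgraph of $G'$ is already a subgraph of $G$. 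So I need to show: for at least one choice of replacement edge among $ab,bc,ac$ (that is not already present in $G-v$), no such violating subgraph exists.

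The main obstacle, and the heart of the proof, is this last claim. I would argue by contradiction: suppose each admissible choice produces a subgraph $H_{xy}\subseteq G'$ (for $xy\in\{ab,bc,ac\}$) containing the new edge with $2|V(H_{xy})|-|E(H_{xy})|\le 2$. Removing the new edge gives, in $G-v$, a subgraph $H'_{xy}$ on the same vertex set containing $\{x,y\}$ with $2|V(H'_{xy})|-|E(H'_{xy})|\le 3$; combined with the Laman condition on $G$ (applied to $H'_{xy}$, which is also a subgraph of $G$), equality holds, so each $H'_{xy}$ is \emph{tight}. The key combinatorial lemma I would then invoke (and prove in one line by inclusion-exclusion) is the submodular-type fact: if $H_1,H_2$ are tight subgraphs of a $(2,3)$-sparse graph sharing at least two vertices, then $H_1\cup H_2$ is also tight. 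Applying this to any two of $H'_{ab},H'_{bc},H'_{ac}$ (which pairwise share at least the common neighbour in $\{a,b,c\}$, and in fact can be arranged to share two of the three neighbours) yields a tight subgraph of $G-v$ containing all of $a,b,c$; adding $v$ and its three edges back produces a subgraph of $G$ violating the Laman inequality, the contradiction. Once this case analysis succeeds, $G'$ is Laman, the inductive hypothesis applies, and appending the corresponding Henneberg $2$ move recovers $G$.
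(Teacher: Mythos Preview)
Your overall strategy matches the paper's: induct on $|V|$, locate a low-degree vertex, and in the degree-$3$ case show that some reverse Henneberg~$2$ move yields a Laman graph via tight-subgraph arguments. However, the degree-$3$ analysis has a genuine gap. Your submodular lemma requires that the two tight subgraphs being merged share at least two vertices, but $H'_{ab}$ and $H'_{bc}$ are only guaranteed to share the single vertex $b$; your parenthetical assertion that they ``can be arranged to share two of the three neighbours'' is unjustified. It is entirely possible that all three pairwise intersections are singletons, and then your lemma simply does not apply. The paper (Lemma~\ref{l:constraintgraph1}) handles this by an explicit case split: if some pairwise intersection has at least two vertices, apply the union lemma twice (the second application succeeds because the union of the first two then contains two of $a,b,c$, which also lie in the third); if instead all three pairwise intersections are the singletons $\{a\},\{b\},\{c\}$, compute directly by three-way inclusion--exclusion that $f(H'_{ab}\cup H'_{bc}\cup H'_{ac})=3+3+3-6=3$. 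In either case the triple union is tight, and adding $v$ back with its three edges gives freedom $2$, the desired contradiction.

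A secondary issue: your treatment of the case where some of $ab,bc,ac$ are already edges of $G$ is muddled. All three cannot be edges (that would yield a $K_4$ subgraph with $f=2$), so that aside is moot --- and the fix you propose there is wrong anyway, since deleting $v$ together with a triangle edge gives $2|V'|-|E'|=5$, not $3$. When one or two of the pairs are edges you have fewer ``admissible'' replacement edges and hence fewer $H'_{xy}$ available; the clean resolution, used implicitly in the paper, is to allow $H'_{xy}$ to be \emph{any} tight subgraph of $G-v$ containing $x$ and $y$. An existing edge $xy$ then serves as its own $H'_{xy}$, so that all three tight subgraphs are always present for the union argument above.
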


The starting point for the proof of this fact is the observation that if $G$ is Laman
with no degree $2$ vertex then
there are at least $6$ vertices of degree $3$.
Indeed, if $n_i$ is the degree of the $i^{th}$ vertex then
$\Sigma_i n_i = 2|E|$ and so
\[
6=4|V|-2|E|= \sum_i(4-n_i).
\]
On any of these vertices
there is a way of performing an inverse Henneberg $2$ move on $G$ to create a Laman
graph. This was established by Laman \cite{Lam}
and requires some care  for one can
easily see that there are non-Laman graphs which become Laman after
a particular Henneberg 2 move.

Define $f(H)=2|V(H)|-|E(H)|$
for any graph $H=(V(H),E(H))$. This could be referred to as the \textit{freedom number} of $H$
(representing a sense of the total degrees of freedom when the vertices are viewed as having two degrees of freedom).
We remark that the definition of
a graph $(V,E)$ entails that  $|V|\geq 1$ and $|E|\geq 0$.

\begin{defn}\label{d:indptgraphs}
(a) A  graph
$G$ is \emph{$(2,3)$-sparse} if
$f(H) \geq 3$ for all subgraphs $H$
containing at least one edge
and is \emph{$(2,3)$-tight} if it is $(2,3)$-sparse and $f(G) = 3$.

(b) A graph
$G$ is \emph{$(2,2)$-sparse} if
$f(H) \geq 2$ for all subgraphs $H$
and is \emph{$(2,2)$-tight} if it is $(2,2)$-sparse and $f(G) = 2$.
\end{defn}

Recall that $k$-connectedness
means that if fewer than $k$ vertices are removed from a  graph then it remains connected.
One can readily check that while a Laman graph is $2$-connected, a $(2,2)$-tight graph is in general just $1$-connected.

{
The next elementary lemma is useful in the construction of tight graphs and also plays a role in the proof of  Lemma \ref{l:constraintgraph1}}

\bigskip
\begin{lem}\label{l:union} Let $r = 2$ or $3$.
Let $G$ be $(2,r)$-sparse with subgraphs
 $G_1$ and $G_2$  which
are $(2,r)$-tight. If $f(G_1 \cap G_2) \geq r$ then 
$G_1 \cap G_2$ and $G_1 \cup G_2$ are $(2,r)$-tight.
\end{lem}

\begin{proof}
As a subgraph of $G$, $f(G_1 \cup G_2) \geq r$.
We have
\[
f(G_1 \cup G_2)+f(G_1 \cap G_2) = f(G_1) + f(G_2)=2r
\]
and so 
\[f(G_1 \cup G_2) =f(G_1 \cap G_2)=r.
\]
\end{proof}

{
The next lemma (see for example \cite{GSS}, \cite{Lam} and \cite{whi-1})
provides the key for proof of Proposition \ref{p:lamangraph}.
Its analogue for degree $2$ vertices is elementary.}

\begin{lem}\label{l:constraintgraph1}
Let  $G$ be a $(2,3)$-tight graph with a degree $3$ vertex. Then there is a $(2,3)$-tight graph $G'$ with a Henneberg $2$ move $G' \to G$.
\end{lem}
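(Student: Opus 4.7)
My plan is to exhibit a pair of neighbors $\{x,y\}\subseteq\{a,b,c\}$ of the degree-$3$ vertex $w$, with $xy\notin E(G)$, so that the graph $G'$ obtained from $G$ by removing $w$ and its three incident edges and adjoining $xy$ is maximally independent of type $3$. The freedom count $f(G')=f(G)=3$ is automatic (one loses one vertex and three edges, and gains one edge), so the substance is verifying $f(H)\geq 3$ for every subgraph $H\subseteq G'$ containing an edge. As a preliminary I note that at most two of the three pairs $\{ab,bc,ac\}$ can already be edges of $G$, since otherwise $\{w,a,b,c\}$ would induce $K_4$, a subgraph of $G$ with $f=2<3$, contradicting independence.

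I argue by contradiction, assuming every admissible reverse Henneberg $2$ move fails. For each non-edge pair $xy\subseteq\{a,b,c\}$, the failure delivers a maximally independent subgraph $H_{xy}\subseteq G-w$ with $\{x,y\}\subseteq V(H_{xy})$ and $f(H_{xy})=3$. A crucial preliminary observation is that the third neighbor $z$ of $w$ lies outside $V(H_{xy})$: otherwise adjoining to $H_{xy}$ the vertex $w$ and the edges $wx,wy,wz$ would yield a subgraph of $G$ with $f=3-1=2$, violating independence. I then define $K_{xy}$ by adjoining to $H_{xy}$ the vertex $w$ and the edges $wx,wy$; this is a maximally independent subgraph of $G$ with $f=3$ containing $w,x,y$ and the edges $wx,wy$.

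I now split by the number $k\in\{0,1,2\}$ of pre-existing edges among $\{ab,bc,ac\}$. When $k=2$, say $ab,bc\in E(G)$ with only $ac$ available, adjoining to $H_{ac}$ the vertices $w,b$ and the edges $wa,wb,wc,ab,bc$ (using $b\notin H_{ac}$) gives a subgraph of $G$ with $f=2$, contradiction. When $k=1$, say $ab\in E(G)$, both remaining moves must fail; since $K_{bc}\cap K_{ca}$ contains the edge $wc$, Lemma~\ref{l:union} makes $K_{bc}\cup K_{ca}$ maximally independent with $f=3$, and adjoining $ab$---absent from this union because $a\notin H_{bc}$ and $b\notin H_{ca}$---drops $f$ to $2$, contradiction.

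The case $k=0$ is the main obstacle, since no pre-existing edge is available to directly lower $f$. Any two of $K_{ab},K_{bc},K_{ca}$ share an edge $wy$ incident to $w$, so two applications of Lemma~\ref{l:union} yield $W:=K_{ab}\cup K_{bc}\cup K_{ca}$ maximally independent with $f(W)=3$; hence
\[
f(H_{ab}\cup H_{bc}\cup H_{ca})=f(W-w)=f(W)+1=4.
\]
I then sub-case on whether some pairwise intersection $H_{xy}\cap H_{yz}$ has $f\geq 3$. If so, Lemma~\ref{l:union} gives $f(H_{xy}\cup H_{yz})=3$; since this union contains all of $a,b,c$, adjoining to it the vertex $w$ and the edges $wa,wb,wc$ produces a subgraph of $G$ with $f=2$, contradiction. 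Otherwise each pairwise intersection has $f=2$ and must coincide with the single shared vertex from $\{a,b,c\}$, the third-neighbor exclusion forces the triple intersection $H_{ab}\cap H_{bc}\cap H_{ca}$ to be empty, and inclusion--exclusion for $f$ gives $f(H_{ab}\cup H_{bc}\cup H_{ca})=9-6+0=3$, contradicting the value $4$ established above.
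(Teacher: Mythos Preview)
Your argument is correct, but it is considerably more elaborate than the paper's, and the extra case analysis is avoidable. The paper does \emph{not} split on how many of $ab,bc,ca$ are already edges: it simply posits, for each of the three pairs $xy$, a subgraph $H_{xy}\subseteq G\setminus v$ with $x,y\in V(H_{xy})$ and $f(H_{xy})=3$. The point you did not exploit is that when $xy\in E(G)$, the single-edge subgraph on $\{x,y\}$ already has $f=3$ and serves as $H_{xy}$; so the three blockers exist uniformly, and your cases $k=1,2$ collapse into the case $k=0$. The paper then shows $f(H_{xy}\cup H_{yz}\cup H_{xz})=3$ in all sub-cases and adds $v$ with its three edges to reach $f=2$, contradicting independence.

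A couple of minor remarks on your write-up. First, your ``third-neighbor exclusion'' $z\notin V(H_{xy})$ is valid but is not needed for the emptiness of the triple intersection in your final sub-case: once each pairwise intersection is a singleton (necessarily $\{a\},\{b\},\{c\}$), the triple intersection is already contained in $\{a\}\cap\{b\}\cap\{c\}=\emptyset$. Second, your alternative contradiction in the last sub-case (computing $f$ of the triple union as both $4$ via the $K_{xy}$ construction and $3$ via inclusion--exclusion) is a nice touch, but it requires the machinery of the $K_{xy}$ and $W$; the paper achieves the same end simply by re-adjoining $v$ and its three edges to the union to obtain $f=2$. What your approach buys is an explicit identification of where the reverse move fails in each configuration; what the paper's approach buys is brevity and a single unified contradiction.
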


We now discuss a particular class of $(2,2)$-tight graphs.

\begin{defn}\label{def:plusone}
A graph $G=(V,E)$ is a \textit{Laman-plus-one graph} if it is connected and simple,
with no degree $1$ vertices and is  such that { for some edge $e$} the graph
 $G\setminus e=(V, E\setminus e)$ is a Laman graph.
 \end{defn}

Note that if $G$ is constructed as two copies of $K_4$ joined at a common vertex, or joined by two connecting edges, then $G$ is $(2,2)$-tight but is not a Laman-plus-one graph.

{ The next proposition is due to Haas et al \cite{haa}. It may be proven by first noting that for a Laman graph one has the
vertex degree counting equation $6= \sum_i(4-n_i)$, where $n_i$ is the degree of the
$i$th vertex. Accordingly if there are no vertices of degree $2$ then there are a number of vertices of degree $3$. By examining the various cases it can be shown that the addition of an edge cannot inhibit  all the potential inverse Henneberg moves on the remaining  vertices of degree $3$, except in the case that $G$ is $K_4$.}

\begin{prop}\label{p:lamanplusonegraph}
Every Laman-plus-one graph is obtained from $K_4$ by a sequence of Henneberg $1$
and $2$ moves.
\end{prop}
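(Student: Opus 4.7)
The proof proceeds by induction on $|V(G)|$. The base case $|V|=4$ is immediate: the Laman plus one condition forces $|E|=2|V|-2=6=\binom{4}{2}$, so $G=K_4$. For the inductive step, given a Laman plus one graph $G$ with $|V|\ge 5$, I will produce a reverse Henneberg~1 or~2 move to another Laman plus one graph $G'$ and apply the inductive hypothesis.

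The degree sum $\sum_v(4-\deg v)=4|V|-2|E|=4$, together with $\deg v\ge 2$ (no degree 1 vertices), forces $G$ to have either a vertex of degree 2 or at least four vertices of degree 3, giving two cases.

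In Case 1, let $v$ be a degree 2 vertex with neighbours $a,b$ and put $G'=G\setminus v$. Two freedom-count lemmas in the spirit of Lemma~\ref{l:union} are needed: in any Laman plus one graph with $|V|\ge 5$, no two degree 2 vertices are adjacent (so $a,b$ have degree $\ge 3$ and $G'$ has no degree 1 vertex) and a degree 2 vertex is never a cut vertex (so $G'$ is connected). To verify that $G'$ is Laman plus one, pick an edge $e$ with $G\setminus e$ Laman: if some such $e$ is not incident to $v$, then $G'\setminus e=(G\setminus e)\setminus v$ is Laman, since a non-cut degree 2 vertex can be removed from a Laman graph preserving Laman; if every valid $e$ is at $v$, then every bad subgraph of $G$ (a subgraph with $|E|>1$ and $f=2$) contains $v$, so $G'$ contains no bad subgraph and $G'\setminus e''$ is Laman for any non-bridge $e''\in E(G')$.

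In Case 2 I seek a degree 3 vertex $v$ with neighbours $\{a,b,c\}$ and a missing edge $e'\in\{(a,b),(a,c),(b,c)\}\setminus E(G)$, and set $G'=(G\setminus v)+e'$. To make $G'$ Laman plus one it suffices that $G\setminus v$ be Laman, which holds whenever every bad subgraph of $G$ contains $v$; in particular it suffices to choose $v$ to be an endpoint of some distinguished edge $e$, since bad subgraphs share $e$ and both endpoints of a distinguished edge have degree $\ge 3$ in $G$ (lest $G\setminus e$ acquire a degree 1 vertex incompatible with Laman). The main obstacle is the subcase in which every such degree 3 endpoint $v$ is \emph{blocked}, meaning its neighbourhood induces a triangle and $\{v,a,b,c\}$ spans $K_4\subseteq G$. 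I expect to exclude this by exploiting the Laman plus one structural constraint that all bad subgraphs, in particular all $K_4$ subgraphs, contain the distinguished edge $e$; the blocking $K_4$'s are thus pinned together at $e$, and a freedom-count using $|E|=2|V|-2$ and the Case 2 minimum-degree-3 condition rules out total blocking, paralleling the paper's observation that two $K_4$'s joined at a vertex are maximally independent of type 2 but not Laman plus one. This detailed case analysis, adapting Lemma~\ref{l:constraintgraph1} from type 3 to type 2, is the most delicate part of the proof.
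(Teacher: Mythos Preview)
Your overall structure matches the paper's: both argue that a minimal counterexample has no degree 2 vertex (your Case 1, which is fine, though the subcase ``every valid $e$ is at $v$'' is vacuous since a degree 2 vertex of $G$ would become a degree 1 vertex in the Laman graph $G\setminus e$), and then handle the degree 3 situation. But your Case 2 takes a different route from the paper and is genuinely incomplete.

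You want a degree 3 vertex $v$ with $G\setminus v$ Laman, and propose taking $v$ to be an endpoint of the distinguished edge $e$. Two problems. First, nothing guarantees that an endpoint of $e$ has degree exactly 3: in Case 2 both endpoints of $e$ may have degree $\ge 4$, with the four (or more) guaranteed degree 3 vertices lying elsewhere, and you give no argument for locating a suitable $v$ in that situation. Second, even when a degree 3 endpoint exists, you explicitly leave the blocked $K_4$ subcase as ``I expect to exclude this'' rather than a proof, and the sketch you give (pinning the blocking $K_4$'s at $e$ and counting) does not obviously close.

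The paper sidesteps both difficulties by working in the Laman graph $H=G\setminus e$ rather than in $G$. Lemma~\ref{l:constraintgraph1} already supplies, at \emph{any} degree 3 vertex $w$ of $H$, a reverse Henneberg 2 move $H\to H_-$ with $H_-$ Laman; then $H_-+e$ is Laman plus one automatically, provided only that the edge inserted by the move is not $e$ itself. The task therefore reduces to showing that among the several degree 3 vertices of $H$ one can perform the reverse move without reintroducing $e$ --- a ``many candidates, one forbidden outcome'' count, which is exactly what the paper's short degree-counting case analysis carries out. Your requirement that $G\setminus v$ itself be Laman is strictly stronger and ties $v$ to the bad-subgraph structure of $G$ in a way that is much harder to control.
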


{
\begin{rem}{\rm
We remark that the subclass of $3$-connected Laman graphs is relevant to
the Galois nonsolvability of frameworks in the plane and here one needs alternative moves for an inductive analysis. See Owen  and Power \cite{O&P}. (The general problem
in this area remains open.) Such alternative moves include vertex splitting, a move which  also features in the derivability
of $(2,1)$-tight graphs \cite{N&O} but which is not needed in the $(2,2)$-tight case.}
\end{rem}
}

\begin{rem}{\rm A simple connected graph is said to be a \textit{generically rigid graph} for the plane if it is rigid as a framework in $\bR^2$  in some vertex-generic realisation. In view of Laman's theorem this means that $G$ is a Laman graph plus some number of extra edges. More strongly, a graph $G$ is \textit{redundantly rigid} if it is rigid and remains so on removal of any edge. Redundant rigidity is plainly stronger than being Laman-plus-one and is intimately tied up with the topic of global (unique realisation) rigidity. We remark that the globally rigid graphs in the plane are $K_2$, $K_3$ and those that are derivable from $K_4$ by Henneberg 2 moves plus edge additions. This rather deeper result is discussed in Jackson and Jordan   \cite{J&J}, \cite{Jac}.}
\end{rem}

The following  lemma is the key
for bridging the gap between Laman-plus-one graphs and $(2,2)$-tight graphs.

\begin{lem}\label{l:alternative}
Let $G$ be a $(2,2)$-tight graph with at least one edge. Then either
\begin{enumerate}
\item[(i)] there exists a proper $(2,2)$-tight subgraph $H \subset G$ such that no vertex $v \in V(G \setminus H)$ is adjacent to more
than one vertex in $H$, or

\item[(ii)] $G$ is a Laman-plus-one graph.
\end{enumerate}
\end{lem}
\begin{proof}
Suppose that $G$ is not Laman-plus-one. Then there is a proper subgraph $J \subset G$ such that $f(J)=2$ and we may choose $J$ minimal
(with respect to this property) with $|E(J)| \geq 1$. Since $G$ is not Laman-plus-one, for any edge $e \in E(G)$ there is a subgraph
$H \subseteq G \setminus e$ such that $f(H)=2$. In particular we may choose $e \in E(J)$ and we may choose $H$ maximal in $G \setminus e$
such that $f(H)=2$. We have $|V(H)| < |V(G)|$
because otherwise $f(G)=f(H)-1=1$.

Suppose $H$ does not satisfy property (i). Then there are vertices $a,b \in V(H)$ and $v \in V(G \setminus H)$ such that edges
$av,bv \in E(G)$. If $av,bv \neq e$ then $f(H \cup av,bv)=2$ and $H \cup av,bv \subset G \setminus e$ which contradicts the maximality of $H$.
We may assume therefore that $av=e$. This implies $a \in V(H \cap J)$ because $e \in E(J)$. By Lemma \ref{l:union}
(with $r=2, G_1=H$ and $G_2=J$) we have $f(H \cup J)=f(H \cap J)=2$. 
The minimality of $J$ implies $V(H \cap J)= a$ and then $f(H \cup J \cup bv)=1$ which contradicts the $(2,2)$-sparsity of $G$.
\end{proof}

Note that, as with the $K_4$ examples above, two $(2,2)$-tight graphs
may be joined at a common  vertex, or may be joined
by two disjoint edges to create a new $(2,2)$-tight graph. Thus the class of $(2,2)$-tight graphs is closed under these two joining operations.
Using these two moves with $K_4$ one obtains large graphs which are
$(2,2)$-tight which have no inverse Henneberg move
to a $(2,2)$-tight graph. However these join moves, together with the Henneberg moves, are not sufficient to generate all $(2,2)$-tight graphs. The graph in Figure \ref{3conmi2}, or indeed any $3$-connected $(2,2)$-tight graphs with no inverse Henneberg move, can not be reduced using the inverse of either of these two joining operations.

\begin{center}
\begin{figure}[ht]
\centering
\includegraphics[width=4cm]{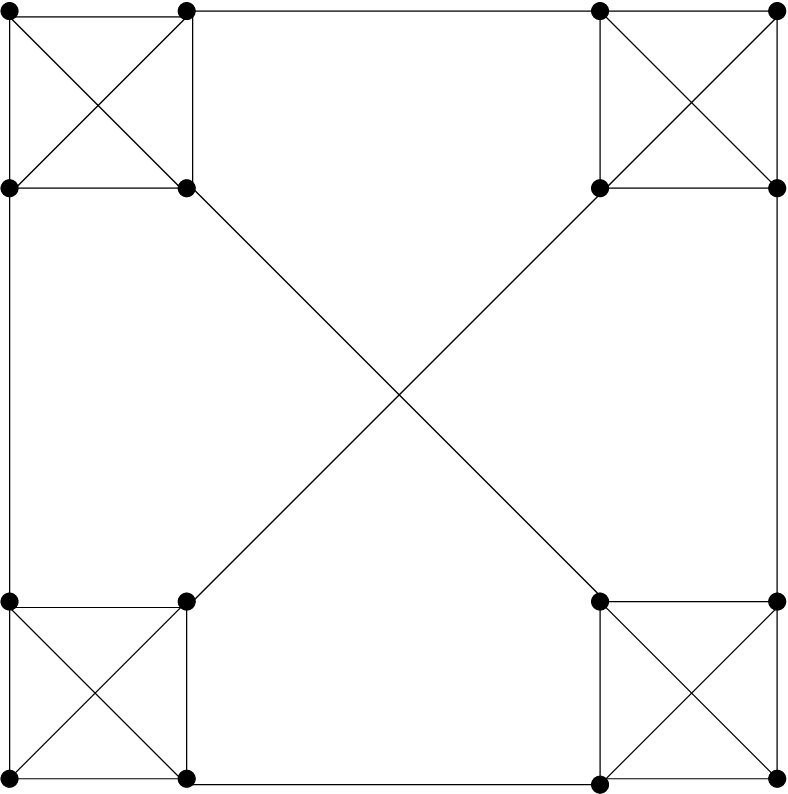}
\caption{A $3$-connected $(2,2)$-tight graph with no inverse Henneberg move.}
\label{3conmi2}
\end{figure}
\end{center}

The following contraction move, which is a companion to the last lemma,
will be used in the proof of Theorem \ref{t:type2graph}.

Let $G$ be $(2,2)$-sparse and let $H$ be a proper subgraph with $f(H)=2$.
Write ~$G/H$~ for the multigraph in which $H$ is contracted to  a single vertex. This is  the graph for which the vertex set is $(V(G)\setminus V(H))\cup \{v_*\}$ and the edge set is $(E(G)\setminus E(H))\cup E_*$
where $E_*$ consists of the edges $(v,v_*)$ associated with edges $(v,w)$
with $v$ outside $H$ and $w$ in $H$.
If $G$ is $(2,2)$-sparse then so is $G/H$ if it happens to be a simple graph.

To see that $G/H$ is $(2,2)$-sparse let $K \subseteq G/H$ and let $\hat{K} \subset G$
be the subgraph for which
\[
V(\hat{K})= (V(K)\setminus \{v_*\})\cup V(H), \quad E(\hat{K})=\pi_e^{-1}(E(K))\cup E(H)
\]
where $\pi_e: E(G)\setminus E(H) \to E(G/H)$ is the natural map.
Since $\pi_e^{-1}:E(K) \to E(G)$ is one-to-one it follows that
\[
2 \leq f(\hat{K})=2(|V(K)|-1)+2|V(H)|-(|E(K)|+|E(H)|)
\]
\[
= f(K)-2 +f(H) = f(K)
\]
as desired.

Note that the simplicity of $G/H$ is guaranteed by the simple condition that no vertex $v \in V(G \setminus H)$ is adjacent to more than one vertex in $V(H)$.

We now identify a natural set of moves through which we may derive from $K_4$ all the
$(2,2)$-tight graphs with at least one edge.

{\begin{defn}\label{d:extensionmove}
An extension move $H \to G$ in the class of simple graphs is an inclusion map
$H \to G$ such that  $G/ H$ is a simple graph. A $(2,2)$-tight extension move (or simply an extension move if the context is clear) is an extension move $H \to G$
for which $H, G$ and $G/ H$ are $(2,2)$ tight.
\end{defn}
}

{
\begin{thm}\label{t:type2graph}
Every  $(2,2)$-tight simple graph with more than one vertex can be obtained from $K_4$
through a finite sequence of Henneberg moves and $(2,2)$-tight extension moves.
\end{thm}}

{
\begin{proof} Suppose that there is a nonsingleton graph
$G_*$ which is $(2,2)$-tight
and which is not derivable using the three moves.
Suppose also that $G_*$ has a minimal number of vertices
amongst such graphs.
By Proposition \ref{p:lamanplusonegraph}
$G_*$ is not Laman-plus-one. Thus (ii) in  Lemma \ref{l:alternative} holds for some subgraph $H$.
But in this case the quotient $G/ H$ is simple and by previous remarks it is $(2,2)$
tight. Thus $G_*$ has an inverse extension move, contrary to its definition.
\end{proof}}

We now use the inductive characterisations to obtain a straightforward deduction of the spanning tree characterisations mentioned in the introduction, namely
the equivalences between (i) and (ii) in the next two theorems. The first characterisation is due to Recski \cite{Rec} where the proof is based on determinental expansions. See also Whiteley \cite{whi-1} for a matroidal proof. The second characterisation was obtained by Nash-Williams in \cite{N-W}, through analysis of critical set partitions, where additionally
$k$-fold spanning tree decompositions are also characterised.

A graph $H=(V,E)$ is said to be an edge-disjoint union of $k$ spanning trees if there is a partition $E_1, \dots , E_r$ of $E$ such that the subgraphs $(V, E_1), \dots , (V, E_r)$ are (connected) trees.

\begin{thm}\label{t:nash-williams1}
The following assertions are equivalent for a (simple) connected graph $G$.
\begin{enumerate}
\item[(i)] $G$ is $(2,3)$-tight.
\item[(ii)] If $G^+$ is the graph (or multi-graph) obtained from $G$ by adding an edge (including doubling  an edge)  then $G^+$  is an edge-disjoint union of two spanning trees.
\item[(iii)] $G$ is derivable from $K_2$ by Henneberg moves.
\end{enumerate}
\end{thm}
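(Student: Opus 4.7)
The plan is to prove the chain $(\text{iii})\Rightarrow(\text{i})\Rightarrow(\text{ii})\Rightarrow(\text{i})$, using the classical Nash-Williams theorem as a black box: a finite multi-graph $H$ is an edge-disjoint union of $k$ spanning trees if and only if $|E(H)|=k(|V(H)|-1)$ and $|E(H')|\leq k(|V(H')|-1)$ for every subgraph $H'$ of $H$. The equivalence $(\text{i})\Leftrightarrow(\text{iii})$ is essentially already in hand: $K_2$ satisfies (i) trivially, both Henneberg $1$ and Henneberg $2$ moves preserve the freedom number $f$ (check: $+1$ vertex and $+2$ edges in each case) and one verifies routinely that no subgraph of the new graph can violate the subgraph bound, so $(\text{iii})\Rightarrow(\text{i})$; the reverse implication is Proposition \ref{p:lamangraph}. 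The substantive content is therefore the equivalence $(\text{i})\Leftrightarrow(\text{ii})$, and the trick is to translate the Laman count $2|V|-3$ into the Nash-Williams count $2(|V|-1)$ by absorbing the extra edge into $G^+$.

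For $(\text{i})\Rightarrow(\text{ii})$, fix any added (possibly parallel) edge $e_*$ and set $G^+ = G+e_*$. Then
\[
|E(G^+)| = (2|V|-3)+1 = 2(|V|-1),
\]
so the global Nash-Williams count is met. For a subgraph $H'$ of $G^+$, if $e_*\notin E(H')$ then $H'$ is a subgraph of $G$, hence either $|E(H')|=0$ or $|E(H')|\leq 2|V(H')|-3 \leq 2(|V(H')|-1)$. If $e_*\in E(H')$ then $H'\setminus e_*$ is a subgraph of $G$ on the same vertex set, giving $|E(H'\setminus e_*)|\leq 2|V(H')|-3$ and therefore $|E(H')|\leq 2(|V(H')|-1)$. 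Nash-Williams now delivers the desired edge-partition of $G^+$ into two spanning trees.

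For $(\text{ii})\Rightarrow(\text{i})$, the decomposition forces $|E(G^+)|=2(|V|-1)$, so $|E(G)|=2|V|-3$ and $f(G)=3$. To verify the subgraph condition, take any subgraph $H$ of $G$ with at least one edge $e$, and apply the hypothesis to the multi-graph $G^+$ obtained by doubling $e$. Let $H^+$ be the corresponding submultigraph of $G^+$, with vertex set $V(H)$ and edge multi-set $E(H)\cup\{e\}$; by the necessity direction of Nash-Williams applied to the two-tree decomposition of $G^+$, we have
\[
|E(H)|+1 = |E(H^+)| \leq 2(|V(H)|-1),
\]
so $f(H)\geq 3$, confirming (i).

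The main obstacle is essentially bookkeeping: one must handle the multi-graph nature of $G^+$ (since doubling an existing edge is permitted) and be careful that the doubled edge is treated as an extra parallel edge, not collapsed to the original simple edge, so that the Nash-Williams count goes through unchanged. Once this is secured, both directions of $(\text{i})\Leftrightarrow(\text{ii})$ are immediate, and the theorem follows by combining with Proposition \ref{p:lamangraph}.
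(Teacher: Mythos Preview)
Your proof is correct, but it takes a genuinely different route from the paper's, and the difference matters in context.

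The paper proves the cycle $(\text{ii})\Rightarrow(\text{i})\Rightarrow(\text{iii})\Rightarrow(\text{ii})$. The implication $(\text{ii})\Rightarrow(\text{i})$ is elementary (the same forest-counting argument you sketch), and $(\text{i})\Rightarrow(\text{iii})$ is Proposition~\ref{p:lamangraph}. The substantive step for the paper is $(\text{iii})\Rightarrow(\text{ii})$, which it proves by an explicit inductive construction: given a two-tree decomposition of $G^+$ for every added edge, it shows by hand how to manufacture a two-tree decomposition of $(G')^+$ after a Henneberg move, via a case analysis on where the new vertex and the added edge sit relative to the old trees. This is entirely self-contained.

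You instead prove $(\text{i})\Leftrightarrow(\text{ii})$ by invoking the general Nash--Williams theorem (the $k$-spanning-tree criterion) as a black box, then close the loop with $(\text{i})\Leftrightarrow(\text{iii})$ via Proposition~\ref{p:lamangraph} and a routine check. Logically this is fine, and it is certainly shorter. But note the paper's explicit intent, stated just before the theorem: ``for completeness we \emph{derive} the Nash--Williams characterisations''. The whole point of the paper's argument is to recover the spanning-tree description from the Henneberg inductive scheme without appealing to Nash--Williams; your proof reverses this, taking Nash--Williams as input. So while your argument is a valid proof of the theorem in isolation, it is not the proof the paper is after, and in the paper's own terms it would be viewed as essentially circular.
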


\begin{proof}
That (ii) implies (i) is elementary (as given explicitly in the proof below) while (i) implying (iii) follows from Proposition \ref{p:lamangraph}.
We show by elementary induction that (iii) implies (ii).

Let $G\rightarrow G^{'}$ be a Henneberg $1$ move, adding a degree $2$ vertex $v$, and let $(G^{'})^{+}$ be obtained from $G^{'}$ by addition of an edge $e$ (including doubling).

If $e$ is added to $G$ then we may assume $G+e$ is the union of $2$ edge-disjoint spanning trees. To each of the trees we may add one of the new edges.

The other case is when $e=uv$ for some $u \in V(G)$, indicated in Figure \ref{f:nash}.

\begin{center}
\begin{figure}[ht]
\centering
\includegraphics[width=5cm]{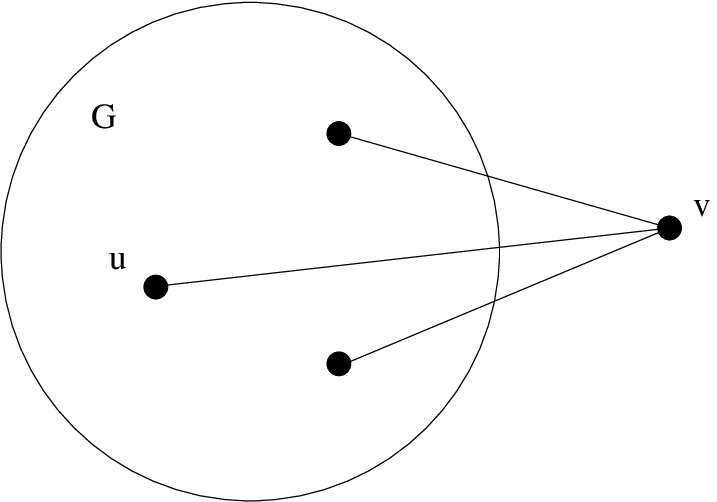}
\caption{$(G')^+$, obtained from $G$ by  Henneberg $1$ move plus added edge $uv$.}
\label{f:nash}
\end{figure}
\end{center}

 Suppose $G^{+}=G\cup f, f=gh,$ decomposes into two edge-disjoint spanning trees $T_{1},T_{2}$.
We now have a decomposition of $G$ into a spanning tree $T_{1}$ and $T_{2}\setminus f$ which is either $(a)$ an edge-disjoint spanning (disconnected) forest or $(b)$ an edge-disjoint (non-spanning) tree.
In case $(a)$ if $v$ is adjacent to vertices in both connected components of $T_{2}\setminus f$ then add both new edges (in the Henneberg move) to $T_{2}\setminus f$ to form $T_{2}^{'}$ and add the ``addition''  edge to $T_{1}$ to get $T_{1}^{'}$.
If $v$ is adjacent to vertices in the same connected component then add one of the new edges (in the Henneberg move) to $T_{1}$ and one to $T_{2}\setminus f$, then add the ``addition''  edge to the other component of $T_{2}\setminus f$ to get $T_{1}^{'}$ and $T_{2}^{'}$.

In case $(b)$ suppose the vertex not in $T_{2}\setminus f$ is $w$. If $w$ is adjacent to $v$ then add $vw$ and some $vx$ to $T_{2}\setminus f$ and add the ``addition''  edge to $T_{1}$ to form $T_{1}^{'}$ and $T_{2}^{'}$.

Finally if $w$ is not adjacent to $v$ then add the new edges one to each of $T_{1}$ and $T_{2}\setminus f$ and add the ``addition''  edge $vw$ to $T_{2}\setminus f$ to get $T_{1}^{'}$ and $T_{2}^{'}$.

By construction in each case $T_{1}^{'}$ and $T_{2}^{'}$ are edge-disjoint spanning trees for $(G^{'})^{+}$ and a very similar elementary argument holds for the Henneberg $2$ move which we leave to the reader.

Since $K_{2}^{+}$ is an edge-disjoint union of two spanning trees the proof is complete.


\end{proof}

\begin{thm}\label{t:nash-williams2}
The following assertions are equivalent for a (simple) connected graph $G$
with at least one edge.
\begin{enumerate}
\item[(i)] $G$ is $(2,2)$-tight,
\item[(ii)] $G$  is an edge-disjoint union of two spanning trees,
\item[(iii)] $G$ is derivable from $K_4$ by Henneberg moves and subgraph extensions.
\end{enumerate}
\end{thm}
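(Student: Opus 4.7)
The plan is to establish the cycle $(ii)\Rightarrow(i)\Rightarrow(iii)\Rightarrow(ii)$, with Theorem \ref{t:type2graph} carrying the middle implication.

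For $(ii)\Rightarrow(i)$, a two-spanning-tree decomposition forces $|E(G)|=2(|V(G)|-1)$ and so $f(G)=2$. For any subgraph $H$ the intersection $E(H)\cap E(T_i)$ is a forest on $V(H)$ (being a subset of a tree), hence contributes at most $|V(H)|-1$ edges; summing over $i=1,2$ gives $f(H)\geq 2$. Thus $G$ is maximally independent of type $2$.

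For $(i)\Rightarrow(iii)$, let $\C$ be the smallest family containing $K_4$ and closed under Henneberg moves and subgraph extensions; this is Henneberg complete by construction. To invoke Theorem \ref{t:type2graph} I must verify that every member of $\C$ is maximally independent of type $2$, and for this it suffices to show that each of the three operations preserves this property. For Henneberg $1$ and $2$ this is a standard edge-vertex count together with a subgraph-by-subgraph check analogous to the arguments in Section 2. For the subgraph extension along a vertex-induced $H$, the identities $|V(G)|=|V(H)|+|V(G/H)|-1$ and $|E(G)|=|E(H)|+|E(G/H)|$ give $f(G)=f(H)+f(G/H)-2=2$; for an arbitrary $K\subseteq G$ one writes $K_H=K\cap H$ and $K_\pi=\pi(K)\subseteq G/H$ (where $\pi$ contracts $H$), applies the same identities, and concludes $f(K)\geq 2$. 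Theorem \ref{t:type2graph} then guarantees that $\C$ contains every maximally independent graph of type $2$ with at least one edge.

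For $(iii)\Rightarrow(ii)$, I induct on the length of the derivation from $K_4$. The base case is the decomposition of $K_4$ as the edge-disjoint union of the path $v_1 v_2 v_3 v_4$ and the tree on edges $\{v_1 v_3,\ v_1 v_4,\ v_2 v_4\}$. Assuming $G'\to G$ is one of the allowed moves and $G'=T_1'\cup T_2'$ is an edge-disjoint two-tree decomposition, I construct one for $G$ as follows. Under a Henneberg $1$ move adjoining $v$ with edges to $x,y$, extend $T_1'$ by $(v,x)$ and $T_2'$ by $(v,y)$. Under a Henneberg $2$ move subdividing $(x,y)$ via $v$ and adding $(v,z)$, the edge $(x,y)$ lies in some $T_i'$; replace it by the path $x$--$v$--$y$ to obtain $T_i$, and adjoin $(v,z)$ to the other tree. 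For a subgraph extension with $H$ vertex-induced and decompositions $T_1^H\cup T_2^H$ of $H$ and $S_1\cup S_2$ of $G/H$, lift each $S_i$ back to $G$ (each edge of $S_i$ incident with $v_*$ lifts to the unique edge of $G$ it represents) and set $T_i=T_i^H\cup\tilde S_i$. The count $|T_i|=(|V(H)|-1)+(|V(G/H)|-1)=|V(G)|-1$ together with connectivity (any two vertices are joined inside $V(H)$ via $T_i^H$, inside $V(G)\setminus V(H)$ via a lifted $S_i$-path, and across the two regions by patching through $T_i^H$ when the corresponding $S_i$-path crosses $v_*$) shows that each $T_i$ is a spanning tree; edge-disjointness is immediate since $T_1^H\cap T_2^H=\emptyset$ and $\tilde S_1,\tilde S_2$ lie in disjoint edge sets.

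The main obstacle is the bookkeeping in the subgraph extension step, in both directions. In $(i)\Rightarrow(iii)$ one must track how subgraph independence passes through contraction and re-expansion; in $(iii)\Rightarrow(ii)$ one must verify that the lifted spanning trees remain connected after patching across $v_*$, where an $S_i$-path crossing the contracted vertex may be lifted to two edges meeting $V(H)$ at distinct vertices and must be bridged using the spanning tree $T_i^H$ of $H$.
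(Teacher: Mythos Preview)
Your proposal is correct and follows essentially the same route as the paper: the cycle $(ii)\Rightarrow(i)\Rightarrow(iii)\Rightarrow(ii)$, with Theorem~\ref{t:type2graph} supplying the middle implication and an induction on derivation length for the last. Your treatment of the subgraph-extension step in $(iii)\Rightarrow(ii)$ (lifting the spanning trees of $G/H$ and patching across $v_*$ via $T_i^H$) is exactly the paper's construction, with your direct path argument in place of the paper's brief cycle-contradiction; and your added verification in $(i)\Rightarrow(iii)$ that the derivable family consists of maximally independent type~$2$ graphs is a point the paper passes over (it simply cites Theorem~\ref{t:type2graph}), though strictly speaking the proof of that theorem does not use this part of its hypothesis.
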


See \cite{N&O} for an extension of Theorem \ref{t:nash-williams2}.

\begin{proof}
That (ii) implies (i) is elementary as follows.
Let the two edge-disjoint spanning trees be $T_1=(V,E_1)$ and $T_2=(V,E_2)$. It is a simple property of trees that $|E_i|=|V|-1$ and $|E_i'| \leq |V'|-1$ for all subgraphs $T_i'=(V',E_i')$ of $T$, for $i=1,2$.
Clearly $E$ is the disjoint union of $E_1$ and $E_2$ and  so $|E|=2|V|-2$ and $|E'| \leq 2|V'|-2$ for all subgraphs $G'=(V',E')$ of $G$.

Theorem \ref{t:type2graph} shows that (i) implies (iii) and we now show  that (iii) implies (ii) by induction.

As in the last proof (with simplification due to the absence of edge addition)
the Henneberg $1$ and $2$ moves preserve the spanning trees property of (ii).
Suppose then
that $G/H$ and $H$ decompose into edge-disjoint spanning trees and let $G$ be formed by the graph extension move, where $v_{*} \in G/H$ is replaced by $H$. We  show that $G$ decomposes into edge-disjoint spanning trees.

Note that $v_{*}$ has degree $d \geq 2$. Suppose the two spanning trees for $G/H$ are $T_{1}$ and $T_{2}$ and the two for $H$ are $H_{1}$ and $H_{2}$. Suppose there are $m$ edges incident to $v_{*}$ in $T_{1}$ and $n$ edges incident to $v_{*}$ in $T_{2}$. Call these subsets of edges $E_{1}$ and $E_{2}$ respectively. That is, $E_{i}=\{av_{*}:a \in S_{i}\}$ where $S_i \subseteq V(T_i)$, $i=1,2$.

In the extension move these edges are replaced with edges incident to vertices in $H$. Call these new subsets of edges $E_{1}^{'}$ and $E_{2}^{'}$ respectively,
so that
$$E_{1}^{'}=\{aw_a :a \in S_{1}\},  \quad E_{2}^{'}=\{au_a :a \in S_{2}\}.
$$
Then we claim that $G$ decomposes into two edge-disjoint spanning trees $G_{1}$ and $G_{2}$ where, abusing notation slightly,
$$G_{i}=((T_{i} \setminus E_{i}) \cup H_{i} \cup E_{i}^{'}).$$

It is clear that every edge of $G$ is in $G_{1}$ or $G_{2}$, that no edge is in both, that every vertex is in $G_{1}$ and in $G_{2}$, and  that $G_{1}$ and $G_{2}$ are connected. It remains to show that $G_{1}$ and $G_{2}$ are trees and we need only consider $G_{1}$. Suppose that there is a cycle in $G_{1}$. Then there exists some pair of vertices $a,b \in V(G) \setminus V(H)$ incident to some edges in $E_{1}^{'}$ such that $a$ and $b$ are connected in $G \setminus H$. However this connectedness is necessarily present in $(G/H) \setminus v_{*}$ and so there is a cycle in $G/H$, a contradiction.
\end{proof}

{
\begin{rem}{\rm
In a similar spirit, Crapo \cite{Cra} showed that the class of $(2,3)$-tight graphs are exactly the graphs which have a 3T2
decomposition. This is a decomposition into 3 edge-disjoint trees such that each vertex is in exactly 2 of them and no subgraph with
at least one edge is spanned by subgraphs of two of the three trees. Spanning tree decompositions are of interest because they
produce efficient polynomial time algorithms for checking generic minimal rigidity, whereas algorithms based on checking that all
subgraphs satisfy the independence type are exponential in the number of vertices. See Graver, Servatius and Servatius \cite{GSS} for
more details.  An alternative polynomial time algorithm, applicable to minimally rigid graphs in the plane and to
$(2,2)$-tight graphs, is the elegant pebble game algorithm due to Hendrickson and Jacobs \cite{H&J}, see also \cite{L&S}.
}\end{rem}
}

\begin{rem}
{\rm
The class of $(2,2)$-tight \textit{multigraphs} has been considered by
Ross \cite{Ros} in the setting of periodic frameworks and has been shown to be the relevant
class of graphs  for a Laman type theorem for periodic isostaticity. Here the
flat torus  plays the role of the ambient space
and finite frameworks on it, with possibly wrap-around (locally geodesic edges) model the relevant periodic frameworks.
Interestingly, see \cite{Tay2}, all such graphs derive from the singleton graph by Henneberg $1$
and $2$ moves together with the move of a single-vertex double-edge addition  move (being a variant of the Henneberg $1$ move for multigraphs) and a double-edge variant of the Henneberg $2$ move (arising when, in our earlier notation, $v_k=v_i$ or $v_j$).}
\end{rem}

\section{Frameworks on Surfaces}
\label{sec:fonsurface}
We now consider infinitesimal and continuous rigidity for bar-joint frameworks on general surfaces.  In particular we focus on completely regular frameworks as the appropriate topologically generic
notion, noting that for algebraic surfaces this includes the case of (algebraically) generic frameworks.
It is shown that continuous rigidity and infinitesimal rigidity coincide for completely regular frameworks, a fact which will be a convenience later particularly in the consideration of frameworks on the cylinder.

We remark that the basic theory of the rigidity and flexibility of frameworks on surfaces considered here is a local one in the sense that the concepts and properties depend on the nature of $\M$ near the framework points $p_1, \dots , p_n$.

\subsection{Continuous Rigidity}
\label{contregularsubsec}

Let $\M \subseteq \bR^3$ be a surface. Formally this is a subset with the relative topology which is a two-dimensional
differentiable manifold. However, of particular
interest are the
elementary surfaces which happen to be disjoint unions of algebraic surfaces.

Let $G=(V,E)$ and let $|V|=n$.
A \textit{framework on $\M$} is a framework $(G,p)$
in $\bR^3$
with $G$  a simple connected
graph such that the framework vector $p=(p_1, \dots , p_n)$
has \textit{framework points} $p_i$ in $\M$.
The framework
is \textit{separated} if its framework points are distinct.

The \textit{edge-function} $f_G$ of
a  framework $(G,p)$ on $\M$ is the function
\[
f_G : \M^n \to \bR^{|E|}, \quad f_G(q) = (|q_i-q_j|^2)_{e=v_iv_j}.
\]
This is the usual {edge function} of the free framework in $\bR^3$
restricted to the product manifold $\M^n=\M \times \dots \times \M$
consisting
of all possible framework vectors for $G$. It
depends only on $\M$ and the abstract graph
$G$ and for the moment, without undue confusion, we omit the dependence on
$\M$ in the notation.

In the next definition we write
$(K_n,p)$ for the complete framework on the same set of framework vertices
as $(G,p)$.

\begin{defn}\label{d:ctsrigid}
Let $p=(p_1,\dots ,p_{n})$ and let $(G,p)$ be a framework on the surface $\M$.
\begin{enumerate}
\item[(i)] The \emph{solution space} of  $(G,p)$ is the set  $$ V_\M(G,p) = f_G^{-1}(f_G(p)) \subseteq \M^n$$ consisting of all vectors $q$ that satisfy the distance constraint equations
\[ |q_i-q_j|^2 =  |p_i-p_j|^2, \mbox{  for all edges } {e=v_iv_j}. \]
\item[(ii)] A framework $(G,p)$ on $\M$ is \emph{rigid}, or, more precisely, \emph{continuously rigid},
if for every continuous path $p:[0,1] \to  V_\M(G,p)$ with $p(0)=p$ there exists $\delta >0$ such that $p([0,\delta)) \subseteq V_\M(K_n,p)$.
\end{enumerate}
\end{defn}

It is easy to see that this is equivalent to the following definition, which is simply the standard definition of continuous rigidity
with $\bR^3$ replaced by $\M$.
A framework $(G,p)$ on $\M$ is continuously rigid if it does not have a continuous flex $p(t)$ (a continuous function
$p:[0,1] \to \M^n$ with
$p(0)=p$, $|p_i(t)-p_j(t)|=|p_i-p_j|$ for each edge)
such  that $p(t) $ is not congruent to $p$ for some $t$.

The solution space is topologised naturally with the relative topology
and, as with free frameworks, may be referred to  as
the  realisation space of the constrained framework.

We now take into account the smoothness of $\M$ and the smooth parametrisations of $\M$
near framework points.

Let $h(x,y,z)$ be a rational  polynomial with real algebraic variety $V(h)$ in $\bR^3$.
Assume that $\M$ is a subset of $V(h)$ which is a two-dimensional manifold, not necessarily connected, and let $(G,p)$ be a framework on $\M$ with $n$ vertices as before.
We  associate with the framework the following
{augmented equation system} for the $3n$ coordinate variables of points $q=(q_1, \dots ,q_n)$:
\[
|q_i-q_j|^2 = |p_i-p_j|^2, \quad \mbox{for  } v_iv_j \in E,
\]
\[
h(q_i)=0, \quad \quad \quad \mbox{  for  } v_i \in V.
\]
The solution set for these equations is the solution set $V_\M(G,p)$ which we
also view as the set
\[
\tilde{f}_G^{-1}(\tilde{f}_G(p))
\]
where $\tilde{f}_G $ is the augmented edge function from $\bR^{3n} \to \bR^{|E|+n}$
given by
\[
\tilde{f}_G(q) = (f_G(q), h(q_1), \dots , h(q_n)),
\]
where now $f_G$ is the usual edge function for $G$ defined on all of $\bR^{3n}$,
rather than just on $\M^n$.

More generally let $\M$ be a surface in $\bR^3$ for which there are
smooth functions $h_1, \dots , h_n$ which
determine $\M$ near $p_1, \dots , p_n$, respectively.  Then we define the \textit{augmented
edge function} by
\[
\tilde{f}_G(q) := (f_G(q), h_1(q_1), \dots , h_n(q_n)),\quad  q \in \bR^{3n}.
\]

Suppose for the moment that $(G,p)$ is a free  framework in $\bR^d$.
Write $B(p,\delta)$ for the product $B(p_1,\delta)\times \dots \times B(p_{|V|},\delta)$ of the open balls
$B(p_i,\delta)$ of radius $\delta$ centred at the framework points.
Then $(G,p)$ in $\bR^d$
is \textit{regular} if the point $p$ in the domain of the edge function $f_G:\bR^{dn}\to \bR^{|E|}$ is one
where the derivative function $Df_G(\cdot)$ achieves its maximal rank. This is to say that $p$ is
a \textit{regular point} for this function on $\bR^{3n}$.
The regular points form a dense open set in $\bR^{3n}$, since the nonregular (singular) points are
determined by a finite number of polynomial equations.
By standard multivariable analysis
a regular point $p$ in  $V(G,p)$  has a neighbourhood
$$V(G,p)^\delta = B(p,\delta)\cap V(G,p),$$
which is diffeomorphic to a Euclidean ball in $\bR^k \subseteq \bR^{3n}$ for some $k$.
We take the dimension $k$ as the definition of the (``free'') dimension
$\dim(G,p)$ of the framework.  It follows that
all points $q$ close enough to $p$ are regular
and $\dim(G,q)=\dim(G,p)$.

These facts extend naturally to frameworks on surfaces.

\begin{defn}\label{d:regular} Let  $(G,p)$ be a framework on a smooth surface
$\M$ with local coordinate functions $h_1, \dots , h_n$.

(i) Then $(G,p)$ is \emph{regular} if $p$ is a regular point for the
augmented edge function $\tilde{f}_G$ in the sense that the rank of the derivative
matrix is
{maximal} in a neighbourhood of $p$ in $\bR^{3n}$.

(ii) If $(G,p)$  on $\M$ is regular then its \emph{dimension}
is the dimension of the kernel of the derivative  of the augmented edge function
evaluated at $p$;
$$\dim_\M(G,p)
:=\dim \ker D\tilde{f}_G(p).
$$
\end{defn}

For a simple example of an irregular framework on the sphere one may take a triangular framework whose vertices
lie on a great circle.

The local nature of $\M$ near a regular point $p=(p_1, \dots ,p_n)$ for the complete
graph $K_n$   determines what we
refer to as the number of \textit{ambient degrees of freedom at $p$}.
We define this formally  as $d(\M,p) = \dim_\M(K_n,p)$. Thus
 $d(\M,p) = 3,2,1$ or $0$.

The path-wise definition of continuous rigidity of $(G,p)$ on $\M$ given above is in fact equivalent to the following set-wise formula:
for some $\delta >0$ the inclusion
$$V_\M(K_n,p)^\delta \subseteq V_\M(G,p)^\delta$$
is an equality.
This equivalence for an arbitrary framework $(G,p)$ is a little subtle in that it follows from the local path-wise connectedness of real algebraic varieties. (That is, each point has a neighbourhood which is path-wise connected.)
However for a regular framework $V_\M(G,p)^\delta$
is an elementary manifold, diffeomorphic to a Euclidean ball,
with submanifold $V_\M(K_n,p)^\delta $
and the equivalence is evident.
It follows, somewhat tautologically, that if $(G,p)$ is a regular
framework, then $(G,p)$ is rigid on $\M$ if and only if
$\dim_\M(G,p) = d(\M,p)$.

As in the case of free frameworks the regular framework vectors for a graph form a dense
open set in $\M^n$.
However, the most amenable constrained frameworks are those that
are completely regular in the sense of the next definition.


\begin{defn}\label{d:seqreg}
A framework
 $(G,p)$  on a smooth surface $\M$
is \emph{completely regular}  if $(H,p)$ is regular on $\M$
for each subgraph $H$.
\end{defn}

For an example of a regular framework which is not completely regular
consider the following. Let $\M$ consist of two parallel planes distance $1$
apart
and for the complete graph  $K_6$ let  $(K_6,p)$ be a separated framework with three non-colinear framework points in each plane.
Such continuously rigid frameworks are regular. However if  there are points
$p_i, p_j$ on separate planes at a minimal distance of $1$ apart then  $(K_6,p)$
is not completely regular simply because a triangle subframework with this edge has an extra
independent flex.

One might view the completely regular frameworks as those that are ``topologically generic''
and in examples one can readily identify
a dense open
set of completely regular frameworks.

The next proposition establishes a necessary ``Maxwell count''  condition.
Here $p'$ is the restriction of $p$ to $V(G')$.

\begin{prop}\label{p:minrigidnec}
Let $(G,p)$ be a completely regular minimally rigid framework on a smooth
two-dimensional manifold $\M$. Then  $$ 2n-|E| =\dim(\M,p)$$
and for each subgraph $G'$ with $|E(G')|>0$,
$$
 2|V(G')|-|E(G')| \geq\dim(\M,p').
$$
\end{prop}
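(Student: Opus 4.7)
The plan is to analyse the rank of the augmented rigidity matrix $R:=D\tilde{f}_G(p)$, which has $|E|+|V|$ rows and $3|V|$ columns, and to show that minimal rigidity forces every row to be linearly independent. Throughout I use the fact, recorded in the paper just before the proposition, that for a regular framework $(H,p)$ on $\M$ one has $\dim_\M(H,p)=3|V(H)|-\rank D\tilde{f}_H(p)$, and rigidity is equivalent to $\dim_\M(H,p)=d(\M,p)$.

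First I would establish that the $|V|$ ``vertex rows'' of $R$ (namely the gradients $\nabla h_i(p_i)$ sitting in the coordinate block of vertex $i$) are linearly independent: each such gradient is nonzero because $\M$ is a smooth two-manifold cut out by $h_i$ near $p_i$, and the supports are disjoint. Next, I would use minimal rigidity and complete regularity to upgrade this to independence of \emph{all} rows. For each edge $e$, complete regularity gives that $(G\setminus e,p)$ is regular, and non-rigidity of $(G\setminus e,p)$ translates to the strict inequality
\[
\dim\ker D\tilde{f}_{G\setminus e}(p) > d(\M,p) = \dim\ker R.
\]
Since deleting a single row can decrease rank by at most one, this forces $\rank D\tilde{f}_{G\setminus e}(p)=\rank R-1$, i.e.\ the row indexed by $e$ is linearly independent of the remaining $|E|+|V|-1$ rows. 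Applied to every edge, together with the independence of the vertex rows, this yields $\rank R = |E|+|V|$, and so
\[
\dim(\M,p)=d(\M,p)=\dim\ker R = 3|V|-(|E|+|V|)=2|V|-|E|,
\]
proving the equality.

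For a subgraph $G'=(V',E')$ with $|E'|>0$, I would restrict attention to the subset of rows of $R$ indexed by $E'\cup V'$. The key observation is that each such row is supported entirely on the $3|V'|$ coordinates corresponding to $V'$ (edge rows for $e\in E'$ involve only endpoints in $V'$, and vertex rows for $v\in V'$ involve only $v$), so independence as vectors in $\bR^{3|V|}$ passes losslessly to independence in $\bR^{3|V'|}$. These restricted rows are exactly the rows of $D\tilde{f}_{G'}(p')$, hence $\rank D\tilde{f}_{G'}(p') = |E'|+|V'|$ and
\[
\dim_\M(G',p')=2|V'|-|E'|.
\]
Finally, since $G'\subseteq K_{V'}$ the matrix $D\tilde{f}_{G'}(p')$ is obtained from $D\tilde{f}_{K_{V'}}(p')$ by deleting rows, so $\dim_\M(G',p')\geq \dim_\M(K_{V'},p')=d(\M,p')=\dim(\M,p')$, giving the desired inequality $2|V'|-|E'|\geq\dim(\M,p')$.

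The main obstacle is the step that converts the geometric/analytic hypothesis (non-rigidity of each $G\setminus e$) into the linear-algebraic conclusion (row independence). This is where complete regularity is essential: without regularity of every $(G\setminus e,p)$ the equivalence between kernel dimension and rigidity cannot be applied to each edge-deleted subgraph, and the rank argument would collapse. The subgraph part is then a clean bookkeeping consequence of the block structure of $R$.
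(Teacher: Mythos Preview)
Your argument is correct, and the underlying mechanism---minimal rigidity forces each edge row to be essential to the rank---is the same as the paper's. The packaging, however, differs in a way worth noting. The paper builds a chain $G_1\subset G_2\subset\cdots\subset G$ starting from a spanning tree $G_1$, asserts $\dim_\M(G_1,p)=2|V|-|E(G_1)|$ at the outset, and then argues that each added edge must strictly decrease the dimension (else its removal would not destroy rigidity). Counting the strict drops gives the equality. Your version bypasses the chain: you argue directly that every edge row is independent of the remaining rows (via the single-edge deletion and the at-most-one rank drop), and combine this with the elementary independence of the vertex rows. This avoids having to justify separately that the spanning tree already has full row rank, which the paper takes for granted.

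Your treatment is also more complete on the subgraph inequality. The paper's proof establishes only the global equality $2|V|-|E|=d(\M,p)$ and does not explicitly derive the inequality for subgraphs $G'$. Your observation---that the rows indexed by $E'\cup V'$ are supported on the $3|V'|$ coordinates of $V'$, so their independence restricts to give $\rank D\tilde f_{G'}(p')=|E'|+|V'|$, whence $\dim_\M(G',p')=2|V'|-|E'|\geq\dim_\M(K_{V'},p')$---fills this in cleanly. One small caveat: the last step uses that $d(\M,p')=\dim_\M(K_{V'},p')$ is well-defined, i.e.\ that $(K_{V'},p')$ is regular, which is not literally guaranteed by complete regularity of $(G,p)$ since $K_{V'}$ need not be a subgraph of $G$; but this is already implicit in the proposition's statement (which invokes $\dim(\M,p')$), so it is a hypothesis rather than a gap in your reasoning.
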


\begin{proof}
Let $G_1$ be a spanning tree of $G$ with edges $e_1, \dots ,e_m$ and let $G_k \subseteq G_{k+1}$ be subgraphs of $G$ with $|E(G_{k+1})| = |E(G_{k})|+1$, for
{$1 \leq k \leq r$} where $m+r=|E(G)|$.
Since $(G_1,p)$ is regular we have $\dim_\M(G_1,p)=2n-|E(G_1)|=n+1$. By complete regularity
the dimensions $\dim_{\M}(G_k,p)$ are defined
and for each $k$
\[
\dim_\M(G_k,p) \geq \dim_\M(G_{k+1},p).
\]
Suppose that $(G,p)$ is minimally rigid on $\M$. From continuous
rigidity we have
$\dim_\M(G,p) = d(\M,p)$ and by minimal rigidity
the inequalities are strict.
To see this note that the elementary manifolds
 $V_\M(G_k,p)^\delta$
 are determined by multiple
intersections. For example if $e_{k+1}=(v_i, v_j)$ then, for all small enough $\delta>0$,
\[
V_\M(G_{k+1},p)^\delta=V_\M(G_{k},p)^\delta \cap \{q:|q_i-q_j|=|p_i-p_j|\}.
\]
Thus if there is an equality at the $k^{th}$ step
then removal of $e_{k+1}$ does not affect the subsequent inequalities
and we arrive at the rigidity of $(G\setminus e_{k+1},p)$, contrary to minimal rigidity.

By the strict inequalities and noting that $r=|E|-|E(G_1)|= |E|-(n-1))$ we see that
\[d(\M,p) = \dim_\M(G,p) = n+1 -r =    2n - |E|
\]
as desired.
\end{proof}

\begin{rem}
{\rm
Recall that a generic point $p_1$ for a connected surface $\M$ defined by an irreducible rational polynomial equation
$h(x,y,z)=0$ is one such that every rational polynomial $g$ vanishing at $p_1$ necessarily vanishes on $\M$. One may similarly define a generic framework $(G,p)$ on $\M$ as one for which every rational polynomial $g$ in $3n$ variables which vanishes on the framework vector $(p_1, \dots , p_n)$ necessarily  vanishes on $\M^n$.
Since the set of generic framework vectors is a dense set, generic framework vectors can be found amongst the open set of completely regular framework vectors.}
\end{rem}

\subsection{Infinitesimal Rigidity}
\label{infpaper1sub}
Fix a smooth surface $\M$ in $\bR^3$.

\begin{defn}\label{d:rigiditymatrix}
Let $(G,p)$ be a framework on $\M$ in $\bR^3$ and let
$h_k(x,y,z)=0$ be the local equation  for the surface $\M$
in a neighbourhood of the framework point $p_k$, for $1 \leq k \leq |V|$.
The \emph{rigidity matrix}, or \emph{relative rigidity matrix}, of $(G,p)$ on $\M$
is the $|E|+|V|$ by $3|V| $  matrix
defined in terms of the derivative  of the augmented edge-function
$\tilde{f}_G$ as
\[
R_{\M}(G,p) = \frac{1}{2}(D\tilde{f}_G)(p).
\]
\end{defn}

The factor of $\frac{1}{2}$ is introduced for consistency with existing usage
for the rigidity matrices of free frameworks. For example the usual three-dimensional
rigidity matrix $R_3(G,p)$ for $(G,p)$ viewed as a free framework appears as the submatrix
of $R_{\M}(G,p)$ given by the first $|E|$ rows.
In block operator matrix terms we have
\[
R_{\M}(G,p)  =  \begin{bmatrix}R_3(G,p) \\ \frac{1}{2}Dh(p)\end{bmatrix}
\]
where, with $|V|=n$, the mapping $h:\bR^{3n} \to \bR^{n}$ is
$$h=(h_1(x_1,y_1,z_1), \dots , h_n(x_n,y_n,z_n)).$$
Note that the kernel of the matrix $(Dh)(p)$ is
determined by the remaining $n$ rows and is the subspace
of vectors $u=(u_1, \dots , u_n)$ where $u_k$ is tangent to $\M$ at $p_k$.
Thus the kernel of the relative rigidity matrix  is the subspace of
$\ker R_3(G,p)$ (the space of
free infinitesimal flexes) corresponding to tangency to $\M$.
Vectors in this kernel are referred to  as \textit{infinitesimal
flexes} for $(G,p)$ on $\M$. The subspace of \textit{rigid motion flexes}
is defined to be $\ker R_{\M}(K_n,p)$. When $(K_n,p)$ is regular this space has dimension
 $d(\M,p)$.

\begin{defn}\label{d:infrigid} Let $(G,p)$ be a regular framework with $n$ framework
vertices on the smooth surface  $\M$ and  suppose that $(K_n,p)$ is regular. Then $(G,p)$ is \emph{infinitesimally rigid} if
\[
\dim \ker R_{\M}(G,p) = \dim \ker R_{\M}(K_n,p) = d(\M,p).
\]
\end{defn}

The following theorem is useful when contemplating Henneberg moves on frameworks
and the preservation of rigidity which we turn to in the next section.

\begin{thm}\label{t:rigiditythm}
Let $\M$ be a smooth surface in $\bR^3$.
A regular framework $(G,p)$ on $\M$ is infinitesimally rigid if and only if
it is continuously rigid.
\end{thm}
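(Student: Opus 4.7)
The plan is to reduce the theorem to an identity that the paper has, in essence, already established. Observe first the direct bridge
\[
\dim_\M(G,p) \;=\; \dim \ker D\tilde{f}_G(p) \;=\; \dim \ker R(G,p,\M),
\]
which is immediate from Definition \ref{d:regular} together with the defining relation $R(G,p,\M) = \tfrac{1}{2} D\tilde{f}_G(p)$ of Definition \ref{d:rigiditymatrix}. Applied to the complete graph this yields $d(\M,p) = \dim_\M(K_n,p) = \dim \ker R(K_n,p,\M)$. So infinitesimal rigidity in the sense of Definition \ref{d:infrigid} is exactly the equation $\dim_\M(G,p) = d(\M,p)$.

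Next, I would invoke the characterisation of continuous rigidity of regular frameworks already developed in the section, namely that $(G,p)$ is continuously rigid if and only if the inclusion
\[
V_\M(K_n,p)^\delta \;\subseteq\; V_\M(G,p)^\delta
\]
is an equality for some $\delta>0$. The paper already notes that, since $(G,p)$ and $(K_n,p)$ are both regular, the constant rank theorem applied to the augmented edge functions $\tilde{f}_G$ and $\tilde{f}_{K_n}$ shows both of these solution sets are, for $\delta$ small, elementary submanifolds of $\bR^{3|V|}$ diffeomorphic to Euclidean balls, of dimensions $\dim_\M(G,p)$ and $d(\M,p)$ respectively. Combined with the observation of the previous paragraph, this already gives the equivalence in the easy direction: continuous rigidity forces the two manifolds to coincide and therefore to have equal dimension, which is infinitesimal rigidity.

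For the converse, suppose $\dim_\M(G,p) = d(\M,p)$. Then the inclusion above is an inclusion of a smooth submanifold $V_\M(K_n,p)^\delta$ of $\bR^{3|V|}$ into a smooth submanifold $V_\M(G,p)^\delta$ of $\bR^{3|V|}$ of the same dimension, each diffeomorphic to a Euclidean ball around $p$. By a standard local argument (either via the constant rank theorem: a submanifold of the same dimension as its ambient manifold is locally an open subset; or, equivalently, invariance of domain applied to the continuous injection of equal-dimensional topological manifolds), the inclusion is open and hence, after shrinking $\delta$, an equality. This gives the set-wise form of continuous rigidity.

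The main obstacle is the dimension-matching step in the last paragraph, namely that a submanifold of a manifold of the same dimension is locally the whole manifold. I would handle it by exhibiting local diffeomorphisms $V_\M(G,p)^\delta \cong B^k$ and $V_\M(K_n,p)^\delta \cong B^k$ coming from the constant rank theorem applied to $\tilde{f}_G$ and $\tilde{f}_{K_n}$ at the regular point $p$, and then noting that an injective smooth map between two copies of a Euclidean ball of the same dimension that fixes the origin is open near the origin. The remaining content is bookkeeping: tracking that the neighbourhoods used for the two constant-rank normal forms can be taken inside a single common $B(p,\delta)$, which is routine.
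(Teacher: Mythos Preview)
Your proposal is correct and follows essentially the same route as the paper: both arguments rest on the fact that, for regular $(G,p)$ and $(K_n,p)$, the local solution sets $V_\M(G,p)^\delta$ and $V_\M(K_n,p)^\delta$ are elementary manifolds whose tangent spaces at $p$ are $\ker R(G,p,\M)$ and $\ker R(K_n,p,\M)$, so continuous rigidity (equality of the manifolds) and infinitesimal rigidity (equality of the tangent spaces/dimensions) coincide. The only cosmetic difference is in the ``infinitesimal $\Rightarrow$ continuous'' direction: the paper argues by contrapositive, picking a differentiable flex $p(t)$ in the difference $V_\M(G,p)^\delta \setminus V_\M(K_n,p)^\delta$ and using the chain rule on $\tilde{f}_G\circ p$ to exhibit $p'(0)$ as a non-trivial infinitesimal flex, whereas you invoke the same-dimension submanifold/invariance-of-domain argument directly; these are interchangeable packagings of the same constant-rank-theorem content.
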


\begin{proof}
Let  $p:[0,1]\to V_\M(G,p)$, as in Definition \ref{d:ctsrigid}, be a
(one-sided) continuous flex
of $(G,p)$ on $\M$. Since $p$ is a regular point, if $(G,p)$ is not
rigid on $\M$ then the inclusion
\[
V_\M(K_n,p)^\delta \subseteq V_\M(G,p)^\delta
\]
is proper for all small enough $\delta >0$. Since this is an inclusion of elementary
smooth manifolds there exists a differentiable two-sided flex
$p(t), t \in (-1,1)$ taking values in the difference set (for $t\in (0,\delta))$. Moreover $p(t)$ may be chosen so that $p'(0)$ is not in the tangent
space of  $V_\M(K_n,p)^\delta$ at $p$.
Note that the derivative vector $p'(0) = (Dp)(0)$ in $\bR^{3n}$
lies in the kernel of $R_{\M}(G,p)$. Indeed, if $d_k$ denotes the squared length of the
$k^{th}$ edge of $(G,p)$ then
 we have
\[
\tilde{f}_G \circ p(t) = \tilde{f}_G(p(t)) = (d_1, \dots , d_{|E|}, 0, \dots ,0),
\]
a constant function, and so the derivative
(column matrix) $D(\tilde{f}_G \circ p)(0)$ is zero. By the chain rule
and noting that $p(0)=p$
this  is equal to the matrix product
$(D\tilde{f}_G)(p).(Dp)(0)$.
Thus the flex $v = (Dp)(0)$ is an infinitesimal flex of $(G,p)$ on $\M$ which is not
in $\ker R_{\M}(K_n,p)$. Thus infinitesimal rigidity implies continuous rigidity.

On the other hand continuous rigidity implies equality, for sufficiently small $\delta$, for the elementary manifold inclusion above, and hence equality of the tangent spaces at $p$. This equality corresponds to infinitesimal rigidity.
\end{proof}

In the next section we consider minimally continuously rigid completely regular frameworks.
In view of the theorem above these coincide with the class of minimally infinitesimally rigid
completely regular frameworks. As in the case of free frameworks, we also say that
$(G,p)$ is \textit{isostatic} on $\M$ if it is minimally infinitesimally rigid.

\begin{thm} Let $(K_{|V|},p)$ be regular and let   $(G,p)$ be a completely regular framework on the smooth surface  $\M$. Then $(G,p)$ is isostatic if and only

(i) \[
\rank R_{\M}(G,p) = 3|V|-d(\M,p),
\]
and
(ii)
$$ 2|V|-|E| =d(\M,p).$$
\end{thm}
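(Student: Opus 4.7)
The plan is to reduce both implications to two standard facts: rank-nullity for the relative rigidity matrix, and the equivalence of infinitesimal and continuous rigidity for regular frameworks (Theorem \ref{t:rigiditythm}). Throughout, complete regularity of $(G,p)$ guarantees that every subframework $(H,p)$ is regular, so Theorem \ref{t:rigiditythm} is applicable to $(G,p)$ and to each $(G\setminus e, p)$, and the dimensions $\dim_\M(H,p)$ are well-defined.

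First I would record the basic identity. Since $(K_{|V|},p)$ is regular, $\dim\ker R(K_{|V|},p,\M)=d(\M,p)$, and the rigidity matrix $R(G,p,\M)$ has $|E|+|V|$ rows and $3|V|$ columns. By rank-nullity,
\[
\dim\ker R(G,p,\M)= 3|V|-\rank R(G,p,\M).
\]
Hence $(G,p)$ is infinitesimally rigid on $\M$ (Definition \ref{d:infrigid}) if and only if condition (i) holds.

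For the forward direction, suppose $(G,p)$ is isostatic, i.e.\ minimally infinitesimally rigid. Then (i) is immediate from the identity above. By Theorem \ref{t:rigiditythm}, infinitesimal rigidity and continuous rigidity agree on regular frameworks, so $(G,p)$ is also minimally continuously rigid on $\M$. Proposition \ref{p:minrigidnec} then yields $2|V|-|E|=d(\M,p)$, which is (ii).

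For the converse, assume (i) and (ii). By the basic identity, (i) already gives infinitesimal rigidity of $(G,p)$. For minimality, combine (i) with (ii) to obtain
\[
\rank R(G,p,\M) = 3|V|-d(\M,p) = 2|V|+|E|-|E|-(2|V|-|E|)+|E| = |E|+|V|,
\]
so the rank equals the number of rows and the rows of $R(G,p,\M)$ are linearly independent. Removing any edge $e$ therefore deletes one row and drops the rank by exactly one, so
\[
\dim\ker R(G\setminus e,p,\M) = 3|V| - (|E|+|V|-1) = d(\M,p)+1.
\]
This exceeds $d(\M,p)=\dim\ker R(K_{|V|},p,\M)$, so $(G\setminus e,p)$ is not infinitesimally rigid on $\M$. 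Since complete regularity makes $(G\setminus e,p)$ regular, Theorem \ref{t:rigiditythm} converts this into failure of continuous rigidity. Thus $(G,p)$ is minimally rigid, i.e.\ isostatic.

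The only subtle point, and the place where the hypotheses are genuinely used, is ensuring that the rank-to-row-count equality really does imply row independence and that complete regularity is invoked to pass to $(G\setminus e,p)$; the rest is bookkeeping with rank-nullity and the previously established theorems.
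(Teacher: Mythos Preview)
Your argument is correct and matches the paper's approach: both directions use rank--nullity together with Theorem \ref{t:rigiditythm} and Proposition \ref{p:minrigidnec}, and for the converse the paper likewise observes that (i) and (ii) force the rank to equal the number $|E|+|V|$ of rows, so that no proper edge subset can give a rigid subframework. Two small clean-ups: your displayed chain $2|V|+|E|-|E|-(2|V|-|E|)+|E|$ actually simplifies to $2|E|$, not $|E|+|V|$ --- just write $3|V|-(2|V|-|E|)=|V|+|E|$; and the final detour through Theorem \ref{t:rigiditythm} is unnecessary, since isostatic means minimally \emph{infinitesimally} rigid and you have already shown $(G\setminus e,p)$ is not infinitesimally rigid.
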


\begin{proof}
From the definition a framework is infinitesimally rigid if and only if
$$\rank R_{\M}(G,p) = 3|V| - \dim \ker R_{\M}(G,p)= 3|V|-d(\M,p).$$
If $(G,p)$ is minimally infinitesimally rigid then by the last theorem and the hypotheses it is also minimally continuously rigid. Thus (ii) holds by Proposition \ref{p:minrigidnec}.
It remains to show that if (i) and (ii) hold then the framework, which is infinitesimally rigid (by (i)) is  minimally infinitesimally rigid. This follows since
if $E'\varsubsetneq E$ and $((V,E'),p)$ is rigid then
$|E'|+|V|$ is greater than or equal to the row rank and so
$|E|+|V| >  3|V|-d(\M,p)$ and  $ 2|V|-|E| < d(\M,p)$.
\end{proof}

{
\begin{rem}
 {\rm
Note that for the circular cylinder $\M$ we have
\[\dim_\M(K_2,q) = \dim_\M(K_3,r)= 3 \mbox{ and }  \dim_\M(K_4,p) =2,
 \]
when these frameworks are completely regular. Although each of these frameworks is continuously rigid according to our definition, the graphs $K_2, K_3$ are too small to reveal the flexibility constraints
that the cylinder imposes on larger frameworks. These become evident only for $K_4$ and it is from $K_4$ that we can build rigid frameworks with Henneberg moves.
Indeed
if $G$ is the double triangle graph obtained from $K_3$ by a Henneberg move, then a generic
framework $(G,p)$ is not continuously or infinitesimally rigid.
In fact $G=K_4\setminus e$ and we see that a full ``rotation'' (flex) of $(G,p)$
on the cylinder passes through noncongruent realisations of the ``unrotatable'' framework  $(K_4,p)$.
}
\end{rem}
}

\begin{rem}\label{plausibleremark}
{\rm
Let $(K_4,p)$ be  a
separated  regular realisation of $K_4$ in $\bR^3$.
Then a specialisation of six vertex coordinates is sufficient
to remove all continuous nonconstant flexes of $(K_4,p)$.
If the framework vertices  are all constrained to a smooth surface $\M$ then a
specialisation of at most three
equations is needed to remove all continuous flexes. That three may be necessary
can be seen when $\M$ is a plane, or a union of parallel planes, or when $\M$
is a sphere, or a union of concentric spheres. Let us
define the  \textit{degrees of freedom}
$d(\M)$ of the surface  $\M$
as the minimum number of vertex coordinate specifications necessary to
remove the rigid motions
of all proper completely regular realisations of $K_4$ on $\M$. Thus, for the sphere and
the plane there are $3$ degrees of freedom,
for the infinite circular cylinder there are $2$, and for
many familiar surfaces with only rotational symmetry, such as
cones and tori, there is one degree of freedom.
The degrees of freedom of $\M$  coincides
with the minimum value of $\dim_\M(K_4,p)$ as $p$ ranges over separated completely regular quadruples in $\M$.
In light of this, and our Laman style theorem for the cylinder,
a plausible conjecture is the following: \textit{for reasonable manifolds the graphs for which {every} completely regular framework on $\M$ is continuously rigid are those that are $(2,d(\M))$-tight, together with a number of small exceptions.}}
\end{rem}

\section{Henneberg Moves on Constrained\\ Frameworks}
\label{sec:henmconf}
We now work towards combinatorial (Laman type) characterisations
of rigid frameworks on some elementary surfaces.
The proofs follow a common scheme in which we are required to

(i) establish an inductive scheme for the generation of the graphs in the appropriate
class $\C$ for the surface, where the scheme employs moves of Henneberg type or other moves such as graph extensions,

(ii) show that the moves for $\C$ have their counterparts for frameworks on $\M$
in which minimal rigidity is preserved.

\begin{rem}\label{matroidremark}\rm{
We remark that in the case of algebraic manifolds one may define for each graph $G$
the rigidity matroid $\R_{\M}(G,p)$, determined by a generic framework vector,
as the vector matroid induced by the rows of $R_{\M}(G,p)$.
Thus realising the proof scheme amounts to  the determination of a matroid isomorphism
between $\R_{\M}(G,p)$ and the matroid defined by maximal independence counting
in $G$. Further in the case of the plane, combining this with Laman's theorem shows that the vector matroids $\R_{\M}(G,p)$ and $\R_2(G,p')$ (the standard $2$-dimensional rigidity matroid) are isomorphic.  See  \cite{GSS}, \cite{Jac}. This is perhaps surprising since these matroids are induced by matrices of different sizes. However the isomorphism can be seen by considering the $|V|$ rows in $R_{\M}(G,p)$ as fixed (independent) and identifying the $|E|$ rows in $R_{\M}(G,p)$ with the $|E|$ rows in $R_2(G,p')$. Of course it is only in the case of planes and spheres that such an identification can be made.}
\end{rem}

Let $G \to G'$ be the Henneberg 2 move at the graph level
in which the edge $e = v_1v_2$ is broken at a new vertex $v_{n+1}$
and in which the new edge $v_3v_{n+1}$ is added.
Let   $p=(p_1,\dots ,p_n)$.
A  \textit{Henneberg 2 framework move} $(G,p) \to (G',q)$, with  $(G',q)$ also on $\M$,
is one for which the  edges that are common to both $G$ and $G'$ have the same length.

In  constructions of such moves the framework points
$q_1, \dots , q_n$ may usually be taken close to $p_1, \dots , p_n$. Indeed a Henneberg $2$ framework move will arise from a sequence
\[
(G,p) \to (G\setminus e, p) \to (G\setminus e, p(t)) \to (G',(p(t),p_{n+1}(t)))=(G',q)
\]
where the middle step takes place by a small flex on $\M$ and the final step is determined
by a location of $q_{n+1}=p_{n+1}(t)$ on $\M$, with $t$ small, for the vertex $v_{n+1}$.
We  consider
$(G,p)$ also to be minimally rigid
so that $(G\setminus e, p)$ has one degree of freedom, in the sense that
the local solution space $V(G,p)^\delta$ is
a manifold of dimension $d(\M,p)+1$.

To clarify the consideration of such Henneberg framework moves which preserve minimal (continuous) rigidity we
first consider
frameworks in the plane under the requirement of a simple geometric noncolinearity condition.

\begin{prop}\label{p:hennplane}
Let $\delta >0$, let $(G,p)$ with  $p=(p_1, \dots ,p_n), n\geq 2$ be a completely regular minimally  rigid framework in the plane
with no three of the framework points colinear, and let
$G \to G'$ be a Henneberg 2 move. Then there is a  completely regular minimally rigid framework
$(G', p')$, with $p'=(p_1', \dots ,p_n', p_{n+1}')$, with no three of the framework points colinear, and $|p_1-p_i'|<\delta $, for $1 \leq i \leq n$.
\end{prop}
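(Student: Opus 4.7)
The plan is to take $p'_i=p_i$ for $1\le i\le n$ and to show that, for generic placement of $p'_{n+1}\in\bR^2$, the framework $(G',p')$ with $p'=(p_1,\dots,p_n,p'_{n+1})$ is minimally infinitesimally rigid, completely regular, and has noncolinear triples. By the classical equivalence of infinitesimal and continuous rigidity for regular planar frameworks (Asimow and Roth), minimal infinitesimal rigidity will imply the minimal continuous rigidity asserted, and the choice $p'_i=p_i$ automatically gives $|p_i-p'_i|=0<\delta$ for $i\le n$.

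The key analysis is of the infinitesimal flex space of $(G',p')$ relative to that of $(G\setminus e,p)$. Since $(G\setminus e,p)$ inherits complete regularity from $(G,p)$, and since $(G,p)$ is minimally rigid with $|E(G)|=2n-3$, the kernel $V_0:=\ker R(G\setminus e,p)$ has dimension $4$: three dimensions of infinitesimal rigid motions together with a single additional flex direction $\nu=(\nu_1,\dots,\nu_n)$, which by minimal rigidity of $(G,p)$ must satisfy $(\nu_1-\nu_2)\cdot(p_1-p_2)\ne 0$. An infinitesimal flex of $(G',p')$ is a pair $(\mu,\mu_{n+1})$ with $\mu\in V_0$ and $\mu_{n+1}\in\bR^2$ satisfying $(\mu_{n+1}-\mu_i)\cdot(p'_{n+1}-p_i)=0$ for $i=1,2,3$. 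Because the noncolinear triple $p_1,p_2,p_3$ makes the vectors $p'_{n+1}-p_i$ span $\bR^2$, the $3\times 2$ coefficient system for $\mu_{n+1}$ has rank $2$; so $\mu_{n+1}$ is uniquely determined when the system is consistent, and consistency is the vanishing of a single scalar functional
\[
L_{p'_{n+1}}(\mu)=\sum_{i=1}^3 c_i(p'_{n+1})\,\mu_i\cdot(p'_{n+1}-p_i),
\]
where $(c_1,c_2,c_3)$ spans the left kernel of the $2\times 3$ coefficient matrix and is polynomial of degree two in $p'_{n+1}$. Rigid motions automatically satisfy $L_{p'_{n+1}}\equiv 0$, so minimal infinitesimal rigidity of $(G',p')$ is equivalent to the single condition $L_{p'_{n+1}}(\nu)\ne 0$.

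The main obstacle is to verify that $p'_{n+1}\mapsto L_{p'_{n+1}}(\nu)$ is not identically zero as a polynomial on $\bR^2$. I would establish this by direct evaluation at the point $p'_{n+1}=(p_1+p_2)/2$: here the column vectors satisfy $p'_{n+1}-p_1=-(p'_{n+1}-p_2)$, which forces $c_3=0$ and $c_1=c_2$ (with common nonzero value, because $p_3$ lies off the line through $p_1$ and $p_2$), so $L_{p'_{n+1}}(\nu)$ collapses to a nonzero scalar multiple of $(\nu_1-\nu_2)\cdot(p_1-p_2)$, which is nonzero by hypothesis. Hence the vanishing locus of $L_{\,\cdot\,}(\nu)$ is a proper algebraic subset of $\bR^2$. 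Combining this with the proper closed conditions that $p'_{n+1}$ avoid (i) the finite union of lines through pairs of the $p_i$ (to secure noncolinear triples) and (ii) the loci where the rigidity matrix of any subgraph $H'\subseteq G'$ containing $v_{n+1}$ drops rank below maximum (to secure complete regularity), an open dense set of $p'_{n+1}$ remains admissible. Any such choice — arbitrarily close to the prescribed point — delivers the required framework $(G',p')$.
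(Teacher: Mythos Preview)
Your argument is correct and gives a genuinely different proof from the paper's Proposition~4.1. The paper argues via continuous flexes: it places $p_{n+1}$ on the segment $[p_1,p_2]$, runs the one-parameter flex of $(G\setminus e,p)$ to open a ``hinge'' at $p_{n+1}$, invokes a separation lemma to see that at least one hinge branch has nonconstant distance to $p_3$, and then freezes the motion at some small time $t>0$ by adding the edge to $p_3(t)$. In particular the paper perturbs all of $p_1,\dots,p_n$ to $p_i(t)$. Your approach is purely infinitesimal: you keep $p_i'=p_i$ and reduce rigidity of $(G',p')$ to the nonvanishing of a single polynomial $L_{p'_{n+1}}(\nu)$, which you check at the midpoint of $[p_1,p_2]$. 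This is closer in spirit to the paper's alternative argument in Proposition~4.2 (specialising $p_h$ onto the segment), but you carry it out in the completely regular rather than the algebraically generic setting, and you obtain the slightly sharper conclusion $p_i'=p_i$.

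Two remarks. First, your complete-regularity step can be made immediate once you have established that $R(G',p')$ has independent rows: since $G'$ is Laman every subgraph $H'$ is independent in the generic rigidity matroid, and the rows of $R(H',p'|_{H'})$ are obtained from the corresponding rows of $R(G',p')$ by deleting zero columns, hence remain independent; so $\operatorname{rank}R(H',p'|_{H'})=|E(H')|$ is maximal. Second, the paper's hinge-based proof, while less direct here, is designed to transfer to curved surfaces (Lemmas~4.3--4.7), where the linear-algebra reduction to a single polynomial in $p'_{n+1}$ is less readily available; that is the trade-off between the two routes.
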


\begin{proof} Consider the depleted
framework $(G\setminus e, p)$ with $e=v_1v_2$.
By minimal rigidity
and complete regularity this framework has one degree of freedom modulo
ambient isometries or, more precisely,
$\dim(G\setminus e, p)=4$.
Consider the $1$-dimensional subset $\N$ of $V(G\setminus e, p)^\delta$ consisting of points
$q$ for which $q_1=p_1$ and $q_2$ lies on the line though $p_1$ and $p_2$.
Thus there is a continuous flex $p(t) = (p_1(t), \dots , p_n(t))$ in $\N$ for which
$|p_1(t) - p_2(t)|$ is  decreasing on some small interval
$[0, \delta)$
and we may also assume that this flex  is differentiable.
Now note that this ``normalised'' flex $p(t)$ extends to a flex of the enlarged framework $((G\setminus e)^+, p^+)$ formed by introducing $p_{n+1}$   on the line segment $[p_1,p_2]$, with the two new edges, $[p_1, p_{n+1}]$ and $[p_2, p_{n+1}]$. See Figure \ref{graph1star} and Figure \ref{graph2work}.

\begin{center}
\begin{figure}[ht]
\centering
\includegraphics[width=3cm]{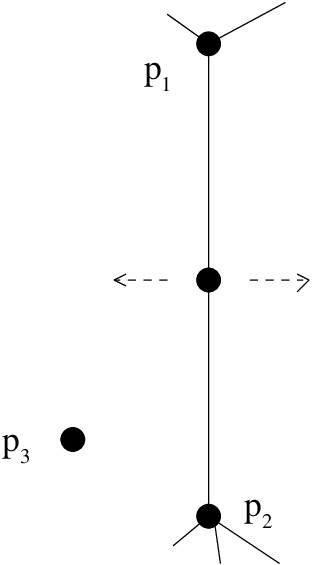}
\caption{Splitting the edge $[p_1, p_2]$.}
\label{graph1star}
\end{figure}
\end{center}

\begin{center}
\begin{figure}[ht]
\centering
\includegraphics[width=3.5cm]{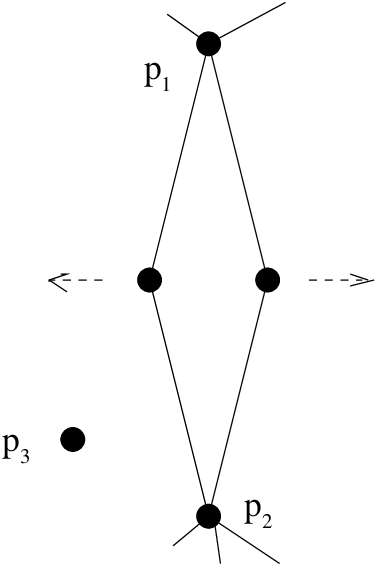}
\caption{The two flexes of $((G\setminus e)^+, p^+)$.}
\label{graph2work}
\end{figure}
\end{center}

There are precisely two such extensions, according to the sense of motion of the hinge point
$p_{n+1}$. It follows from the noncolinearity of $p_1, p_2 $ and $p_3$
that for at least one of these flexes
the separation $s(t)=|p_3(t) - p_{n+1}(t)|$ is a non-constant function on every
interval $[0,\delta]$ for all $\delta < \delta_1$, for some $\delta_1$.
(See Lemma \ref{l:hinge} for a formal proof.)
Since the flex is differentiable $s(t)$ is strictly decreasing or increasing
on a small interval $(0, \delta_2)$. Choose $t$ in this interval and   add the edge
$[p_3(t), p_{n+1}(t)]$  to create the
framework $(G',(p(t),p_{n+1}(t)))$. By construction this is continuously rigid
since there is no nonconstant normalised flex (with $p_1$ fixed and $p_{n+1}$ moving on the
line through $p_1$ and $p_2$).
It also follows readily from the openness of the set of completely regular framework vectors that, for sufficiently small $t$, $(G',(p(t),p_{n+1}(t)))$ is completely regular.
\end{proof}

In the ensuing discussion we focus on continuous flexes and the intuitive device of
hinge separation which we expect to be useful for general manifolds.
However, there are alternative approaches for algebraic surfaces  based on flex specialisation at generic points. (See \cite{NOP2} for example.) We illustrate this with the following alternative
proof to the generic framework variant of the proposition above. Note that
Proposition
\ref{p:hen2isostatic} together with Proposition \ref{p:lamangraph} provide a short proof of the interesting (sufficiency) direction of Laman's theorem.

\begin{prop}\label{p:hen2isostatic}
Let $G \to G'$ be a Henneberg $2$ move and let $(G,p)$ and $(G', p')$
be generic frameworks on the plane with $G$ a Laman graph. If $(G,p)$ is isostatic
on the plane
(minimally infinitesimally rigid) then so too is $(G', p')$.
\end{prop}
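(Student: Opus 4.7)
The plan is to prove the assertion by exhibiting a single non-generic isostatic realisation of $G'$ and then invoking lower semi-continuity of the rank of the rigidity matrix. Concretely, write $G\to G'$ as the edge split in which $e=(v_1,v_2)$ is deleted and a new vertex $v_{n+1}$ is inserted, together with three new edges $(v_1,v_{n+1})$, $(v_2,v_{n+1})$, $(v_3,v_{n+1})$. The Laman counts give $2|V(G')|-|E(G')|=3$, so infinitesimal rigidity of $(G',p')$ is equivalent to $\rank R(G',p')=2|V|-1$. Because every entry of $R(G',\cdot)$ is a polynomial in the coordinates, the subset of framework vectors on which the rank attains a given value $r$ is the complement of a Zariski-closed set, and hence either empty or contains every generic configuration. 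Thus it suffices to produce \emph{some} realisation $(G',q)$ on the plane with $\rank R(G',q)=2|V|-1$.

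The construction is the classical collinear specialisation. Set $q_i=p_i$ for $i=1,\dots,n$ and place $q_{n+1}$ on the open segment between $p_1$ and $p_2$, say $q_{n+1}=\lambda p_1+(1-\lambda)p_2$ with $\lambda\in(0,1)$. I then analyse the kernel of $R(G',q)$. Let $u=(u_1,\dots,u_n,u_{n+1})$ be an infinitesimal flex and write $w=p_1-p_2$. The two constraints
\[
(p_1-q_{n+1})\cdot(u_1-u_{n+1})=0,\qquad (p_2-q_{n+1})\cdot(u_2-u_{n+1})=0
\]
are, after dividing by the nonzero scalars $(1-\lambda)$ and $-\lambda$, equivalent to
\[
w\cdot(u_1-u_{n+1})=0\quad\text{and}\quad w\cdot(u_2-u_{n+1})=0.
\]
Subtracting recovers $w\cdot(u_1-u_2)=0$, which is precisely the original constraint of the removed edge $(v_1,v_2)$ in $(G,p)$. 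Therefore the restriction $(u_1,\dots,u_n)$ is an infinitesimal flex of $(G,p)$, and by the isostatic hypothesis on $(G,p)$ this restriction is a rigid motion of the plane. The two constraints above then pin $u_{n+1}$ in the direction of $w$, leaving one scalar degree of freedom transverse to the line through $p_1,p_2$.

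The last step is to use the third edge to kill this transverse freedom. The constraint $(p_3-q_{n+1})\cdot(u_3-u_{n+1})=0$ determines $u_{n+1}$ in the direction $p_3-q_{n+1}$, and this direction is independent of $w$ precisely when $p_3$ is not on the line through $p_1,p_2$. Since $(G,p)$ is generic, $p_1,p_2,p_3$ are noncollinear, so this edge fixes the remaining degree of freedom of $u_{n+1}$; hence $u$ is the unique rigid-motion extension of the ambient plane motion on $p_1,\dots,p_n$. The kernel of $R(G',q)$ therefore has dimension $3$, so $\rank R(G',q)=2(|V|+1)-3=2|V|-1$, and $(G',q)$ is isostatic. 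By the semi-continuity argument the generic realisation $(G',p')$ is also isostatic.

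The main obstacle is the collinear specialisation step: one must check that placing $v_{n+1}$ on the segment does not accidentally destroy the rank, and the whole argument hinges on noncollinearity of $p_1,p_2,p_3$ (which is exactly what genericity buys). Everything else is a routine rigidity-matrix/rank calculation together with the lower semi-continuity of rank under specialisation, in analogy with the continuous-flex geometry used in Proposition~\ref{p:hennplane}.
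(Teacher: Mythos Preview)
Your proof is correct and follows essentially the same approach as the paper: both place the new vertex collinearly on the segment $[p_1,p_2]$, use the two incident edge constraints to show that any infinitesimal flex restricts to a flex of $(G,p)$ and hence to a rigid motion, and then use the third edge together with the noncollinearity of $p_1,p_2,p_3$ to force $u_{n+1}$ to be the rigid-motion value, transferring the conclusion to the generic realisation via rank semicontinuity. The only cosmetic difference is that the paper runs the argument as a proof by contradiction (specialising a hypothetical non-rigid-motion flex of the generic $(G',p')$), whereas you argue directly; the paper also dispatches the final step more succinctly by observing that an infinitesimal isometry determined on the noncollinear triple $p_1,p_2,p_3$ must send $q_{n+1}=\lambda p_1+(1-\lambda)p_2$ to $\lambda u_1+(1-\lambda)u_2$.
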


\begin{proof} As before we let $v_1, v_2, v_3$ and $v_h$ be the vertices involved in the Henneberg move for the edge $v_1v_2$.
Suppose that $(G', p')$
is not isostatic. Since $G'$ is a Laman graph it follows that the rank of the rigidity matrix $R_2(G',p')$ is less than $2|V|-3$. Since $p'$ is generic this is the case for any specialisation
of $p'$ and in particular for $p'=(p_1,\dots , p_n, p_h)$ where
$(p_1,\dots , p_n)=p$ and $p_h$ is any point on the open line segment
from $p_1$ to $p_2$;
 $p_h=ap_1+(1-a)p_2$, with $0<a<1$.
Thus there is an infinitesimal flex $u'=(u_1, \dots ,u_n, u_h)$ for $(G',p')$
which is not a rigid motion flex.
We have
\[
\langle u_1-u_h, p_1-p_h \rangle =0,  \quad   \langle u_2-u_h, p_2 -p_h \rangle =0,
\]
and so by the colinearity of $p_1, p_2, p_h$,
\[
\langle u_1-u_h, p_1-p_2 \rangle =0,  \quad   \langle u_2-u_h, p_1 -p_2 \rangle =0.
\]
Thus
\[
\langle u_1-u_2, p_1-p_2 \rangle =0,
\]
and so the restriction to $(G,p)$, namely $u=(u_1, \dots ,u_n)$, is an infinitesimal flex of $(G,p)$.

By the hypotheses $u$ is an infinitesimal rigid motion of $(G,p)$.
In particular the restriction $u_r= (u_1, u_2, u_3)$ of $u$ to the
triangle $p_1, p_2, p_3$ is a rigid motion infinitesimal flex for some isometry
$T:\bR^2 \to \bR^2$. But note that
$u_r$ is also a restriction of $u'$, and  the triangle is noncolinear, so it follows  that
$
u_h$ must be equal to $au_1+(1-a)u_2.
$
Thus $u'$ itself is a rigid motion flex, also associated with $T$, contrary to assumption.
\end{proof}

\subsection{Hinge Frameworks}
In the proof of Proposition \ref{p:hennplane} the key point is that
the edge $[p_1,p_2]$ is replaced by two edges $[p_1,p_{n+1}]$ and $[p_{n+1},p_2]$
which can ``hinge'' in \textit{two} directions when $p_1, p_2$ flex towards each other.
Similarly, for frameworks on surfaces we examine the placement of $p_{n+1}$ at such special points. With two flex directions (and with a version of the non-colinearity
condition for $p_3$ relative to $p_1$ and $p_2$) we obtain a ``proper
separation'' of $|p_3(t) - p_{n+1}(t)|$  on all small enough intervals for at least one of
these directions.
This last idea is
formalised rigourously, in a three-dimensional setting, in assertion (ii) of the hinge framework lemma below. While it seems evident that, roughly speaking, generically
one can make a rigidifying Henneberg $2$ move, it should be borne in mind that
the motion $p_3(t)$ is undetermined (and can be an arbitrary algebraic curve \cite{Kem}).
Thus one needs some systematic method for avoiding exceptional placements of $p_{n+1}$
in which there is no proper separation.

Let $H$ be the cycle graph with four edges and four vertices $v_1, \dots ,v_4$
in cyclic order. Let $(H,q)$ be a framework in $\bR^3$ with
$q=(a,b,c,d)$ where  $a, \dots ,d$ are points in $\bR^3$ with $|a-b|=|a-d| \neq
 0$ and $ |c-b|=|c-d|\neq 0$. We refer to this as a  \textit{hinge framework} and when $b=d$ as a \textit{closed hinge framework}.

\begin{lem}\label{l:hinge}
Let $q(t)=(a(t), b(t), c(t), d(t))$ be a continuous flex of the
closed hinge framework $(H,q)$ in $\bR^3$, with $q(0)=q$, such that
$$t \to |b(t)-d(t)|$$
is nonconstant on every interval $[0,\delta), \delta < 1$, and
let $v(t)$ be a path in $\bR^3$.
Then one of the following holds:

(i) for some $\delta > 0$ and all $t \in [0,\delta)$
\[
\langle b(t)-d(t),a(t)-v(t) \rangle = 0,
\]

(ii) at least one of the functions $t\to |v(t)-b(t)|$, $t\to |v(t)-d(t)|$
is nonconstant on all intervals $[0,\delta)$ for $\delta$ less than some $\delta_1$.
\end{lem}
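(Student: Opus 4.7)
The plan is to prove the contrapositive of the dichotomy: assuming (ii) fails, I will derive (i). The negation of (ii) unpacks neatly as follows. Saying that $f(t) := |v(t)-b(t)|$ is nonconstant on $[0,\delta)$ for every $\delta$ smaller than some $\delta_1$ is equivalent to saying $f$ is not constant on any neighbourhood of $0$. Thus the failure of (ii) amounts to the statement that \emph{both} $|v(t)-b(t)|$ and $|v(t)-d(t)|$ are constant on some common initial interval $[0,\epsilon)$.

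The geometric key is the hinge identity. The equality $|a(t)-b(t)| = |a(t)-d(t)|$ is preserved by the flex, and expanding the squared norms gives the equivalent form
\[
\langle b(t)-d(t),\; a(t) - \tfrac{1}{2}(b(t)+d(t))\rangle = 0
\]
for every $t$, so $a(t)$ lies on the perpendicular bisector plane of $b(t)$ and $d(t)$.

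Now the closedness of the hinge gives $b(0)=d(0)$, and hence $|v(0)-b(0)|=|v(0)-d(0)|$. Combined with the assumption that both distance functions are constant on $[0,\epsilon)$, the two constants must coincide, yielding
\[
|v(t)-b(t)| = |v(t)-d(t)| \qquad (t \in [0,\epsilon)).
\]
The very same algebraic expansion as above then gives
\[
\langle b(t)-d(t),\; v(t) - \tfrac{1}{2}(b(t)+d(t))\rangle = 0 \quad \text{on } [0,\epsilon),
\]
and subtracting this from the hinge identity produces $\langle b(t)-d(t),\, a(t)-v(t)\rangle = 0$ on $[0,\epsilon)$, which is (i).

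The only genuinely delicate point is parsing the quantifier structure of (ii) and its negation; after that the argument is a short algebraic manipulation requiring only continuity of the flex (no differentiability) together with the initial coincidence $b(0)=d(0)$, which is what synchronises the two constant values. The hypothesis that $t\mapsto |b(t)-d(t)|$ is nonconstant and the constraint $|c(t)-b(t)|=|c(t)-d(t)|$ on the fourth hinge-vertex play no role in this lemma and enter only later when it is invoked in the Henneberg-move construction on surfaces.
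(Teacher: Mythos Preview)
Your proof is correct and follows essentially the same route as the paper's: assume (ii) fails so that both distance functions are constant on a common initial interval, use the closed-hinge condition $b(0)=d(0)$ to force the two constants to agree, and then subtract the resulting perpendicular-bisector identity for $v(t)$ from the hinge identity for $a(t)$ to obtain (i). Your added remark that neither the nonconstancy of $|b(t)-d(t)|$ nor the constraint on $c(t)$ is used in the argument is accurate.
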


\begin{proof}
Suppose that (ii) fails and the functions are constant
in some interval $[0,\delta)$. Since $b(0)=b=d=d(0)$ the functions are
equal in this interval.
Then, on this interval,
$$\langle v(t)-d(t),  v(t)-d(t)\rangle = \langle v(t)-b(t),  v(t)-b(t)\rangle$$
and so $\langle v(t),b(t)-d(t)\rangle = (|b(t)|^2-|d(t)|^2)/2$. The same is true
with $v(t)$ replaced by $a(t)$ and so (i) follows.
\end{proof}

Consider a fixed value of $t>0$ and
note that apart from the exceptional case when $b(t)$ and $d(t)$ coincide
and  $a(t),b(t),c(t)$ are  colinear there is  a unique plane $P(a(t),b(t),c(t))$
which passes through the midpoint of the line segment $[b(t),d(t)]$ and is normal
to the vector $b(t)-d(t)$.
With $r=(x,y,z)$ this is the plane with equation
\[
\langle b(t)-d(t),a(t)-r \rangle = 0.
\]
Because of distance preservation in the flex $q(t)$ note that the plane
$P(a(t),b(t),c(t))$
passes through $a(t)$ and $c(t)$.
For $t=0$ and $a,b, c$ not colinear we define $P(a(0),b(0),c(0))$  simply as
the plane through
$a, b, c$.
In particular, if $a, b $ and $c$ are not colinear and $v(0)$ does not lie on
$P(a,b,c)$ then (i) fails (at $t=0)$ and (ii) holds.

This lemma may be applied, with  the useful conclusion (ii),  whenever one is able to place $p_{n+1}$ on a surface $\M$ in such a way that the added hinge
framework $(H,(p_1, p_{n+1}, p_2, p_{n+1}))$ is ``opened'' (on $\M$) by decreasing separation motion  $p_1(t)$ and $p_2(t)$.

\subsection{Spheres and Planes}

The case of Henneberg $2$ moves on frameworks on concentric spheres and parallel planes
is straightforward in that it follows the format of the proof of Proposition \ref{p:hennplane} for the plane, making use of the hinge lemma at an appropriate point.

\begin{lem}\label{l:spherehingelemma}
Let $\M_1, \M_2, \M_{n+1}$ be concentric spheres.
Let $p_1(t), p_2(t)$ be paths on the spheres
$\M_1$ and $\M_2$ respectively with $p_1=p_1(0)$,  $p_2=p_2(0)$ such that the separation
$|p_1-p_2|$ is not a local maximum or minimum and such that $|p_1(t)- p_2(t)|$ is decreasing.
Let $p_{n+1}\in \M_{n+1}$ be such that
$p_1, p_2, p_{n+1}$ are not colinear and the plane
$P(p_1, p_2, p_{n+1})$ is orthogonal to
the tangent plane to $\M_{n+1}$ at $p_{n+1}$.
Then for some $\delta_1>0$ the closed hinge framework
$H(q_1,q_2,q_3,q_4) = H(p_1,p_{n+1},p_2,p_{n+1})$
has a flex $q(t)$ for $ t \in [0, \delta_1)$ with $q_1(t)=p_1(t)$,  $q_3(t)=p_2(t)$
and
$|q_2(t)-q_4(t)|$ nonconstant on all intervals $[0,\delta), \delta \leq \delta_1$.
\end{lem}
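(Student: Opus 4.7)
The plan is to reduce the existence of the two hinge branches to a fold-type bifurcation for a single scalar function on a smoothly varying family of circles, and then to read off the nonconstancy of $|q_2(t)-q_4(t)|$ from the leading $\sqrt{t}$ asymptotics of the two branches emerging from the degenerate zero at $t=0$. Let $O$ be the common centre of the three concentric spheres and set $R_{n+1}=|p_{n+1}-O|$, $\rho_i=|p_i-p_{n+1}|$ for $i=1,2$. The orthogonality hypothesis says that the outward normal $p_{n+1}-O$ to $\M_{n+1}$ at $p_{n+1}$ lies in the plane $P=P(p_1,p_2,p_{n+1})$, so $O$, $p_1$, $p_2$, $p_{n+1}$ are coplanar in $P$; equivalently, a unit normal $\nu$ to $P$ is tangent to $\M_{n+1}$ at $p_{n+1}$. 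For each sufficiently small $t$, let $C(t)$ be the smooth circle $\{r\in\bR^3:|r-p_1(t)|=\rho_1,\ |r-p_2(t)|=\rho_2\}$, which varies smoothly in $t$ under the non-colinearity assumption, and fix a smooth arclength parametrisation $r(\theta,t)$ with $r(0,0)=p_{n+1}$ and $r_\theta(0,0)=\nu$. Any flex of the closed hinge $H$ extending $p_1(t),p_2(t)$ with $q_2(t),q_4(t)\in\M_{n+1}$ corresponds to a choice of two zeros $\theta_\pm(t)$ in $\theta$, close to $0$, of
\[
g(\theta,t):=|r(\theta,t)-O|^2-R_{n+1}^2,
\]
since setting $q_1(t):=p_1(t)$, $q_3(t):=p_2(t)$, $q_2(t):=r(\theta_+(t),t)$ and $q_4(t):=r(\theta_-(t),t)$ fulfils the four edge-length constraints of $H$ automatically.

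The heart of the argument is a fold-bifurcation analysis of $g$ near $(\theta,t)=(0,0)$. The orthogonality hypothesis yields $g_\theta(0,0)=2\langle\nu,p_{n+1}-O\rangle=0$ at once, since $\nu\perp P$ while $p_{n+1}-O\in P$. A direct calculation of $g_{\theta\theta}(0,0)$ expresses it in terms of the inequality of the normal curvatures of $C(0)$ and of $\M_{n+1}$ in the common direction $\nu$ at $p_{n+1}$, and this is nonzero outside the degenerate case where the radius of $C(0)$ (computable explicitly from $\rho_1,\rho_2,|p_1-p_2|$) equals $R_{n+1}$. Meanwhile $g_t(0,0)$ turns out to be a nonzero scalar multiple of $\frac{d}{dt}|p_1(t)-p_2(t)|^2\big|_{t=0}$, which is nonzero by the non-extremum hypothesis. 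Writing $g(\theta,t)=A\theta^2+Bt+\text{h.o.t.}$ with $A,B\neq 0$, the Morse lemma (equivalently, Weierstrass preparation) produces, on the side of $t=0$ where $AB<0$, two continuous branches $\theta_\pm(t)=\pm c\sqrt{t}+o(\sqrt{t})$ of zeros of $g$ on some interval $[0,\delta_1)$, with $c>0$. Then $|q_2(t)-q_4(t)|\sim 2c\sqrt{t}$ is strictly monotone on $[0,\delta_1)$ and hence nonconstant on every subinterval $[0,\delta]$.

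The main obstacle I anticipate is the sign compatibility $AB<0$ in the \emph{decreasing} direction of $|p_1(t)-p_2(t)|$. Geometrically, I need to rule out the possibility that for this direction the circle $C(t)$ recedes from $\M_{n+1}$ near $p_{n+1}$, leaving no real intersection points. I would handle this by computing $B$ explicitly from the motion of the centre of $C(t)$ along the segment $p_1(t)p_2(t)$ together with the corresponding change in the radius of $C(t)$, and comparing the resulting sign with the sign of $A$ obtained from the curvature formula above; the key input is that $p_{n+1}-O\in P$, so that the variation of $C(t)$ couples appropriately with the outward normal direction of $\M_{n+1}$ at $p_{n+1}$ to produce two transverse intersections as $|p_1(t)-p_2(t)|$ decreases.
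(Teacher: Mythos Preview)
Your approach is genuinely different from the paper's, and it is also incomplete in the places you yourself flag.

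The paper's argument exploits the fact that concentric spheres have a three-dimensional isometry group (rotations about $O$). One composes the given paths with a time-dependent rotation $R(t)$ so that, after this normalisation, $p_1(t)\equiv p_1$ is fixed and $p_2(t)$ moves along the great circle of $\M_2$ cut out by the plane $P$ through $O,p_1,p_2$. The orthogonality hypothesis forces $p_{n+1}\in P$ as well, so the whole picture has a reflection symmetry in $P$. The two hinge branches $q_2(t),q_4(t)$ are then simply mirror images of one another across $P$, and the opening of the hinge as $p_2(t)$ moves toward $p_1$ is read off from the planar (great-circle) geometry. This symmetry reduction is what the paper means by ``simple geometry of concentric spheres'': there is no Morse-type analysis, and the sign question never arises because the two branches are constructed as reflections rather than as roots of a quadratic.

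Your fold-bifurcation framework is reasonable and is closer in spirit to what the paper does later for the cylinder (Lemma~\ref{l:cylinderhinge}), where no such three-parameter isometry group is available. But as written there are real gaps. First, the assertion that $g_{\theta\theta}(0,0)\neq 0$ does \emph{not} follow from the stated hypotheses: the degenerate case where the radius of $C(0)$ matches the normal curvature of $\M_{n+1}$ in the direction $\nu$ is not excluded by non-colinearity and the orthogonality condition, so you would need either to rule it out by an explicit computation or to treat it separately. Second, your claim that $g_t(0,0)$ is a nonzero multiple of $\tfrac{d}{dt}|p_1(t)-p_2(t)|^2$ depends on how you pin down $r(0,t)$ for $t>0$, which you have not specified; different choices shift $g_t(0,0)$ by a multiple of $g_\theta(0,0)$, and while that vanishes here, the actual relation between $g_t$ and the separation derivative still requires the computation you only sketch at the end. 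Third, and most importantly, the sign compatibility $AB<0$ on the decreasing side is precisely the content of the lemma, and you have not established it; the curvature comparison and the motion of the centre of $C(t)$ can in principle conspire either way, and verifying the favourable sign needs the concrete spherical geometry, not just a generic bifurcation template. The paper's symmetry argument bypasses all three issues at once.
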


The idea is illustrated in Figure \ref{fig:spheres}.

\begin{proof}Note that as for a single sphere the union $\M$ of the spheres $\M_i$ has
three ambient degrees of freedom. That is $d(\M,p)=3$ whenever $p$ is a separated
framework vector $(p_1, \dots ,p_n)$ with $n>1$.
Without loss of generality the flex may be assumed to be  normalised
so that $p_1(t)$ is fixed on $\M_1$ and $p_2(t)$ moves towards $p_1$ along the shorter arc of a great circle on $\M_2$ (whose plane meets $p_1$).
The hypothesis on $p_{n+1}$ ensures that it lies on a corresponding great circle, and also
that $p_{n+1}$ is not on the radial line through  either of these points, or coincident to them in the case that $\M_1 = \M_{n+1}$ or $\M_2 = \M_{n+1}$.
The conclusion follows readily from the simple geometry of concentric spheres.
\end{proof}

\begin{center}
\begin{figure}[ht]
\centering
\includegraphics[width=6cm]{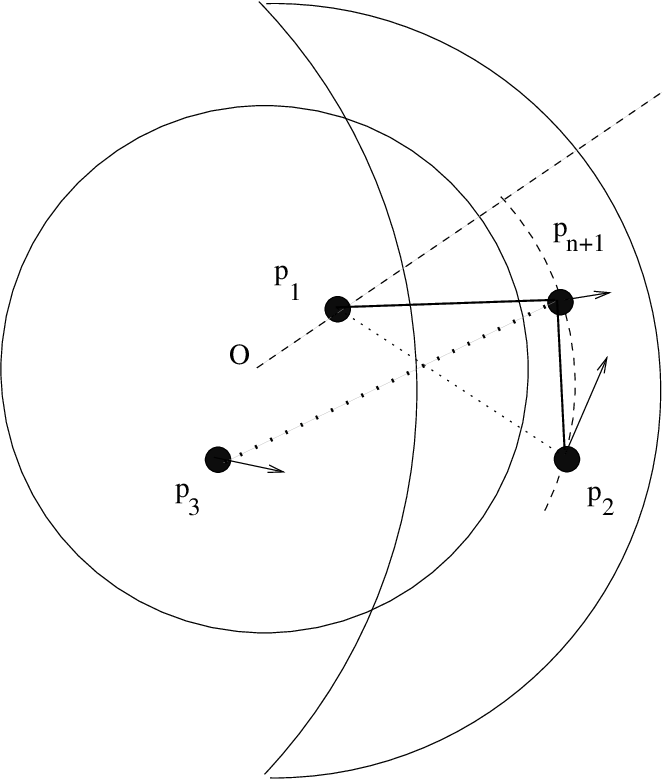}
\caption{With $p_1$ fixed on the inner sphere and $p_3$ moving smoothly
on a concentric sphere, the polar directed smooth motion of $p_2$ on the outer sphere implies infinite initial velocity
at the hinge point $p_{n+1}$ and so strict monotonicity of the separation distance $|p_{n+1}-p_3|$ over a finite time.}
\label{fig:spheres}
\end{figure}
\end{center}

The case of parallel planes has a verbatim statement, with concentric spheres replaced
by parallel planes, and a completely similar proof.

For the Henneberg move construction we require a mild geometric
requirement, being the counterpart  to noncolinearity in the case of a single plane.
More precisely we require that for each  pair $p_i, p_j$ (on separate planes)
the separation $|p_i - p_j|$ is not a local maximum or minimum and that the unique plane $P(p_1, p_2)$ through
the pair, which is orthogonal to the planes (or spheres) of $\M$, meets no  other framework point. We refer to such frameworks as \textit{geometrically generic}.
(In fact one can relax the no extremals condition and treat this class of semigeneric
frameworks separately, although we do not do so here.)

The next Henneberg $2$ framework move lemma has an analogue for the Henneberg $1$ move which is entirely elementary. These framework moves together with standard Laman graph theory are all that are needed for the proof of the sufficiency direction for Theorem \ref{t:lamanplanesandcylinders}.

\begin{lem}\label{l:hennspheres} Let $\M = \M_1 \cup \dots \cup  \M_N$ be a union of parallel planes, or a union of concentric spheres, and let $(G,p)$ be a minimally continuously rigid geometrically generic completely regular framework on $\M$.
Let $\delta >0$, let $s \in \{1, \dots , N\}$ and let
$G \to G'$ be a Henneberg 2 move. Then there is a minimally continuously rigid geometrically generic completely regular  framework
$(G', p')$ on $\M$, with $p'=(p_1', \dots ,p_n', p_{n+1}')$
and $|p_i-p_i'|<\delta $, for $1 \leq i \leq n$, and $p_{n+1}\in \M_s$.
\end{lem}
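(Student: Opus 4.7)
The plan is to follow the scheme used for Proposition \ref{p:hennplane}, replacing the elementary planar hinge argument with the surface-level tools of Lemmas \ref{l:spherehingelemma} and \ref{l:hinge}. Write $e = (v_1,v_2)$ for the edge to be split and $v_3$ for the third vertex of the Henneberg $2$ move. The construction proceeds in three stages: first produce a normalised continuous flex of the depleted framework $(G\backslash e, p)$ along which $p_1$ and $p_2$ approach each other; then place $p_{n+1}$ on $\M_s$ so that Lemma \ref{l:spherehingelemma} extends this flex to a hinge flex of the enlarged framework $((G\backslash e)^+, p^+)$; finally, stop the flex at a small time $t$ and add back the new edge $(v_3, v_{n+1})$.

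For the first stage, minimal rigidity and complete regularity of $(G,p)$ give $\dim_\M(G\backslash e, p) = d(\M, p)+1$, so the local solution manifold, after normalising by fixing a suitable set of coordinates that quotients out the $d(\M,p)$-dimensional ambient rigid motions, is one-dimensional. The geometric genericity hypothesis ensures $|p_1 - p_2|$ is not a local extremum of the separation function along this manifold, so I obtain a differentiable flex $p(t) = (p_1(t), \dots, p_n(t))$ with $|p_1(t) - p_2(t)|$ strictly decreasing on some $[0, \delta_0)$.

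For the second stage, I must choose $p_{n+1} \in \M_s$ so that: (i) $p_1, p_2, p_{n+1}$ are not colinear; (ii) the plane $P(p_1,p_2,p_{n+1})$ is orthogonal to the tangent plane to $\M_s$ at $p_{n+1}$; (iii) $p_3 \notin P(p_1, p_{n+1}, p_2)$; and (iv) the extended framework vector $(p_1, \dots, p_n, p_{n+1})$ is geometrically generic and completely regular. For concentric spheres, condition (ii) amounts to the plane containing the common centre, cutting out a great circle in $\M_s$; for parallel planes it cuts out a line. Within this one-parameter family, the remaining conditions exclude only a nowhere dense set, so a valid $p_{n+1}$ exists. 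Lemma \ref{l:spherehingelemma} then furnishes two extensions $q_2(t), q_4(t) \in \M_s$ of the flex via the two hinge directions, giving flexes of $((G\backslash e)^+, p^+)$ with $p_{n+1}(t)$ equal to one of $q_2(t), q_4(t)$.

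For the third stage, I apply Lemma \ref{l:hinge} with $v(t) = p_3(t)$. Alternative (i) there is ruled out at $t = 0$ by condition (iii), so alternative (ii) holds for at least one of the two hinge directions: $s(t) = |p_3(t) - p_{n+1}(t)|$ is nonconstant on every subinterval of some $[0,\delta_2)$, and after refinement may be taken strictly monotone on $(0, \delta_2)$. Fix a small $t \in (0, \min(\delta, \delta_0, \delta_1, \delta_2))$, set $p' = (p_1(t), \dots, p_n(t), p_{n+1}(t))$, and add the edge $(v_3, v_{n+1})$ with its current length $s(t)$. Any normalised flex of $(G',p')$ would restrict to a flex of $((G\backslash e)^+, p^+)$ along which $s$ must vary by strict monotonicity, so $(G',p')$ admits only the trivial normalised flex and is continuously rigid; minimality follows from the Maxwell count of Proposition \ref{p:minrigidnec}, and complete regularity and geometric genericity, being open conditions on framework vectors, are preserved for small $t$. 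The hardest point I anticipate is the second stage: verifying that the orthogonality constraint of Lemma \ref{l:spherehingelemma} leaves enough residual freedom on $\M_s$ to simultaneously satisfy (iii) and the genericity conditions (iv), which relies on the special concentric/parallel structure of $\M$ to reduce the orthogonality locus to a familiar one-dimensional curve inside which the remaining conditions are easily satisfied.
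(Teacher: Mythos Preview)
Your proposal is correct and follows exactly the paper's approach: the paper's own proof is a single sentence saying that the argument of Proposition~\ref{p:hennplane} carries over verbatim, with Lemmas~\ref{l:spherehingelemma} and~\ref{l:hinge} supplying the hinge placement and the separation dichotomy, and you have spelled out precisely that scheme. One small correction: the fact that $|p_1(t)-p_2(t)|$ is strictly monotone along the normalised flex of $(G\backslash e,p)$ follows from minimal rigidity together with complete regularity (the row of $R(G,p,\M)$ corresponding to $e$ is independent of the remaining rows, so the new flex direction has nonzero derivative of $|q_1-q_2|$), not from the geometric genericity hypothesis, which instead supplies the hypotheses of Lemma~\ref{l:spherehingelemma} and the condition $p_3\notin P(p_1,p_2)=P(p_1,p_{n+1},p_2)$ that you need for~(iii).
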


\begin{proof} The proof has exactly the same form as  that of Proposition \ref{p:hennplane};
with the notation above Lemmas \ref{l:hinge} and  \ref{l:spherehingelemma}
allow for the placement of $p_{n+1}$ so that the flex of $(G\setminus e, p)$
extends to $((G\setminus e)^+,p^+)$ with the separation function $|p_{n+1}(t)-p_3(t)|$ nonconstant on all small
intervals.
\end{proof}

\subsection{Cylinders and Surfaces}
\label{cylinderhen2sub}

We now examine more generally how to place $p_{n+1}$ to create an opening hinge
in the manner of Lemma
\ref{l:spherehingelemma}. This involves the consideration of extremal points
in the sense of the next definition.

\begin{defn}\label{extremaldefn}
Let $\M \subseteq \bR^3$ be a smooth manifold and $p_1, p_2$ distinct points of $\M$.
A point $q\in \M$ is \emph{extremal} for the pair $p_1, p_2$
if there exists a point $w$ on the straight line
through $p_1, p_2$, not equal to $p_1$ or $p_2$, such that $|q-w| < |q'-w|$
for all points $q'\in\M, q \neq q',$ with
$|q-q'| <\delta$, for some $\delta >0$.
\end{defn}
If $q$ is extremal for a pair, as above, then the tangent plane  $T_q$ to $\M$ at $q$ is normal to $q-w$.
Moreover, for small $\delta$ the curve of intersection
\[
\M \cap S(p_1,|p_1-q|)\cap B(q,\delta)
\]
for the surface $S(p_1,|p_1-q|)$ of the closed ball
$B(p_1,|p_1-q|)$ is tangential at $q$ to $S(p_2,|p_2-q|)$ and, apart from the contact point $q$, lies outside the closed ball  $B(p_2,|p_2-q|)$.
Indeed, if this were not the case, for all small $\delta$, then the local closest point property of the extremal point would be violated.



Figure \ref{figgraph3} is indicative of an extremal point $q$, where the plane of the diagram is the plane
$P(p_1, p_2, q)$ through the triple, the curve is in the intersection of this
 plane with $\M$, and the tangent plane $T_q$ to $\M$ at $q$ is orthogonal to
the plane.
Figure \ref{figgraph4} is indicative of the perspective view of such a point and the tangency of  the curves $S(p_1,|q-p_1|)\cap \M$ and  $S(p_2,|q-p_2|)\cap \M$ at $q$.

\begin{center}
\begin{figure}[ht]
\centering
\includegraphics[width=8cm]{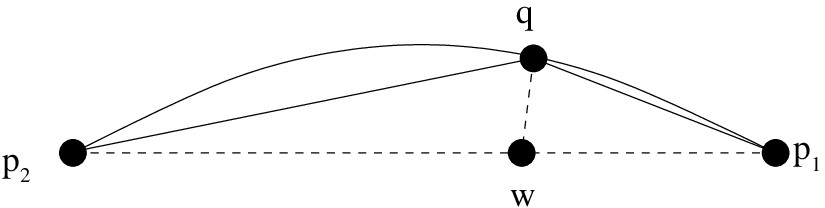}
\caption{An extremal point $q$ for $p_1$, $p_2$, elevation view.}
\label{figgraph3}
\end{figure}
\end{center}

\begin{center}
\begin{figure}[ht]
\centering
\includegraphics[width=6cm]{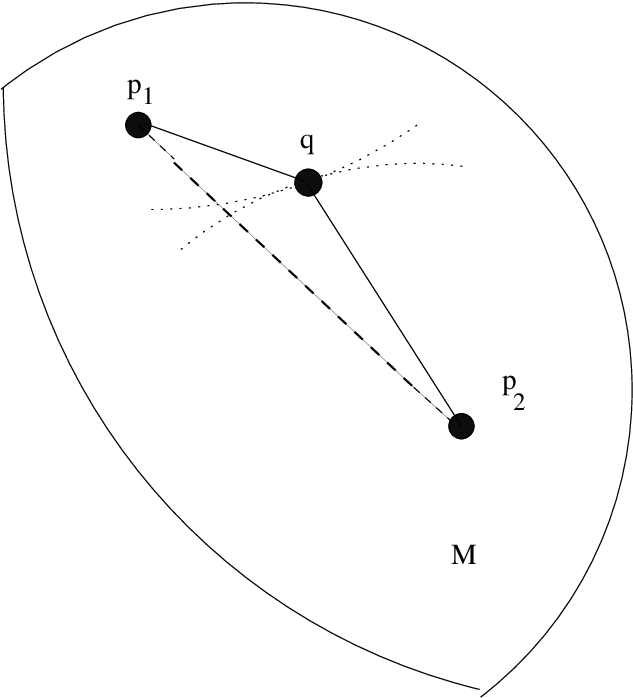}
\caption{An extremal point $q$ for $p_1$, $p_2$ with intersection arcs in $\M$ for
the spheres
$S(p_1,|p_1-q|)$ and
$S(p_2,|p_2-q|)$.}
\label{figgraph4}
\end{figure}
\end{center}

Suppose now that $\M$ is a (circular) cylinder.  If $p_1(t)$ and $p_2(t)$ are continuous paths emanating from $p_1$ and $p_2$ respectively then by rigid motion normalisation we
may assume $p_1(t)=p_1$ for all $t$. Since the cylinder has only two ambient degrees of freedom there is now no further normalisation
available for the adjustment of the motion of $p_2(t)$ or the specification of $p_2'(0)$.
This makes the location of $p_{n+1}$ more problematical in the case that the derivative
of the separation $|p_1 - p_2(t)|$ vanishes at $t=0$. However this complexity
only arises (in our edge-deleted framework context) when $(G,p)$ on $\M$ is infinitesimally
flexible (before the edge deletion). Thus, in view of the equivalence between continuous rigidity and infinitesimal rigidity this difficulty
does not occur for our consideration here. Explicitly, we have the
following condition which expresses  that the separation motion of the pair $p_1(t)$ and  $p_2(t)$ is a
\textit{nontangential separation}:
\[
\langle p_2'(0)-p_1'(0), p_2-p_1 \rangle< 0.
\]

The following simple lemma is needed.  In paraphrase it asserts
the geometrical fact that the tangential departure  $q(t)$ of the point $q$ from
the surface of the ball $B(0,|q|)$, together with an acute-to-$q$ departure $p(s)$ from the origin allows for the solution of the distance equation $|q(t)-p(s(t))|=|q|$ for all $t$ in some small interval, where $t \to s(t)$ is a continuous parameter change.

\begin{lem}\label{l:tangential}
Let $q(s), s\in [0,1]$, be a path in $\bR^3$ starting at $q=q(0)\neq 0$ such that
$\langle q'(0), q\rangle =0 $
 and such that for $s>0$ the path points
$q(s)$ lie outside the closed ball $B(0,|q|)$.
Also, let $p(t), t\in [0,1]$, be a path starting at $p=p(0)= 0$ with
\[
\langle p'(0), q \rangle >0.
\]
Then there is a continuous parameter change $t=t(s)$, for some range $s \in [0,\delta]$, such that in this range
\[
|p(t(s))-q(s)|= |p-q|.
\]
\end{lem}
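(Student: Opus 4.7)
The plan is to introduce the auxiliary function $F(t,s):=|p(t)-q(s)|^2$ on $[0,1]\times[0,1]$ and to analyse the level curve $\{F=|q|^2\}$ through the point $(0,0)$. The desired conclusion $|p(t(s))-q(s)|=|p-q|=|q|$ is precisely the statement that this curve contains a continuous graph $(t(s),s)$ defined on some interval $[0,\delta]$.

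First I would check that $(0,0)$ lies on this level set, since $F(0,0)=|{-q}|^2=|q|^2$, and compute the partial derivatives at the base point:
\[
\partial_t F(0,0) = 2\langle p(0)-q(0), p'(0)\rangle = -2\langle q, p'(0)\rangle < 0,
\]
by the hypothesis $\langle p'(0),q\rangle>0$, and
\[
\partial_s F(0,0) = -2\langle p(0)-q(0), q'(0)\rangle = 2\langle q, q'(0)\rangle = 0,
\]
by the tangency hypothesis. Assuming the paths are $C^1$, the implicit function theorem applies and produces a unique $C^1$ function $s\mapsto t(s)$ on a uniqueness box $(-\delta_1,\delta_1)\times(-\delta_2,\delta_2)$, satisfying $t(0)=0$ and $F(t(s),s)=|q|^2$.

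The remaining point is to check $t(s)\geq 0$ for $s\geq 0$ small, which will be the only genuinely delicate step since $t'(0)=-\partial_s F(0,0)/\partial_t F(0,0)=0$, so first order information does not decide the sign of $t(s)$. Here I would combine the IVT with the IFT uniqueness. Pick $t_0\in(0,\delta_1)$ small enough that $F(t_0,0)<|q|^2$; this is possible because $F(\cdot,0)$ has strictly negative $t$-derivative at $t=0$. By continuity of $F$, the inequality $F(t_0,s)<|q|^2$ persists for all sufficiently small $s$. Meanwhile the hypothesis that $q(s)$ lies outside $\overline{B(0,|q|)}$ for $s\geq 0$ gives $F(0,s)=|q(s)|^2\geq|q|^2$. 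The intermediate value theorem then yields, for each such $s$, some $\tilde t(s)\in[0,t_0]$ with $F(\tilde t(s),s)=|q|^2$. For $s$ sufficiently small, the point $(\tilde t(s),s)$ lies in the uniqueness box of the IFT, so $\tilde t(s)=t(s)$ and in particular $t(s)\geq 0$, completing the proof with $\delta$ the resulting radius.

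The main obstacle I expect is the regularity assumption: the IFT is naturally applied under $C^1$ regularity near $(0,0)$, while the hypotheses as stated only invoke the derivatives $p'(0)$ and $q'(0)$ at the basepoint. If one wishes to work merely with pointwise differentiability one can avoid the IFT entirely: strict monotonicity of $F(\cdot,0)$ on a small interval $[0,t_0]$ is immediate from the sign of $\partial_t F(0,0)$, and a direct continuity argument shows that the IVT-produced $\tilde t(s)\in[0,t_0]$ is unique and varies continuously in $s$, giving the desired parameter change without any appeal to the implicit function theorem.
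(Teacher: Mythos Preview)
Your main argument via the implicit function theorem is correct, but the paper takes a more elementary route that avoids the IFT altogether. With $f(s,t)=|p(t)-q(s)|^2-|q|^2$ it looks at the \emph{diagonal} $t\mapsto f(t,t)$ and argues, from $\langle p(t),q(t)\rangle\geq ct$ together with $|q(t)|^2-|q|^2=o(t)$ (coming from $\langle q'(0),q\rangle=0$), that $f(s,s)<0$ for small $s>0$. Since also $f(s,0)=|q(s)|^2-|q|^2>0$ for $s>0$, the intermediate value theorem provides a zero in $(0,s)$; the paper then takes the first such zero and asserts continuity. Thus where you pin down a fixed upper endpoint $t_0$ and rely on IFT uniqueness for continuity, the paper lets the upper endpoint move with $s$ along the diagonal and never invokes the implicit function theorem. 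Your approach buys you a $C^1$ parameter change and a clean uniqueness/continuity statement, at the price of assuming the paths are $C^1$; the paper's argument needs only the pointwise derivatives at $0$ already in the hypotheses, though its continuity claim for the ``first zero'' is left to the reader. One caution on your closing remark: the claim that strict monotonicity of $F(\cdot,0)$ on an interval follows from the sign of $\partial_t F(0,0)$ under mere pointwise differentiability is not justified (a negative derivative at a single point only gives $F(t,0)<F(0,0)$ for small $t$, not monotonicity on an interval), so that alternative still implicitly uses $C^1$.
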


\begin{proof}Let $f(s,t) = |p(t)-q(s)|^2-|q|^2$.
Consider first the function
\[
t \to f(t,t) = |p(t)-q(t)|^2-|q|^2,
\]
which is zero at $t=0$. In view of the hypotheses, for some positive
number $c$ we have
$\langle p(t), q(t) \rangle \geq ct$ in some small interval $[0,\delta_1]$.
It follows readily that the function $t\to f(t,t)$
is strictly decreasing, and in particular $f(t,t)<0$,
for all $t$ in some small interval $[0,\delta_2]$.

We now see that for fixed $s$ in $[0,\delta_2]$
the function $t \to f(s,t)$ has a strictly positive value at $t=0$ and is negative at
$t=s$. By the intermediate value theorem there is a first point $t(s)$
with  $f(s,t(s))=0$ and moreover, the function $s\to t(s)$ is continuous.
\end{proof}

\begin{lem}\label{l:cylinderhinge}
Let  $p_1, p_2$ be
distinct points on
a cylinder $\M$
such that the line segment from $p_1$ to $p_2$ does not lie in $\M$.
Let $p_2(t)$ be a path on
$\M$ with $p_2(0)=p_2$ such that
\[
\langle p_2'(0), p_2-p_1 \rangle< 0.
\]
Then there is an extremal point $p_{n+1}$ for the pair $p_1, p_2$
such that the closed hinge framework
$H(q_1,q_2,q_3,q_4) = H(p_1,p_{n+1},p_2,p_{n+1})$
has a flex $q(t), t \in [0, \delta_1)$, on $\M$, with $q_1(t)=p_1$,  $q_3(t)=p_2(t)$
and
$|q_2(t)-q_4(t)|$ a nonconstant function on all intervals $[0,\delta), \delta \leq \delta_1$,
for some $\delta_1 >0$.
\end{lem}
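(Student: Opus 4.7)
The plan is to select $p_{n+1}$ as a suitable extremal point close to $p_1$, and then apply the preceding tangential-departure lemma to each of the two tangent directions of the curve $\M\cap S(p_1, |p_{n+1}-p_1|)$ at $p_{n+1}$, thereby producing the two ``arms'' of the hinge flex.

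First I would describe the extremal points on the cylinder explicitly. Since the normal to $\M$ at a point $q$ is the outward radial direction from the axis $\ell$, a point $q\in\M$ is extremal for $(p_1,p_2)$ exactly when it is the foot of perpendicular from some $w$ on the line through $p_1,p_2$, with $w\notin\ell$. As $w$ varies continuously along the line, $p_{n+1}(w)$ varies continuously, and $p_{n+1}(w)\to p_1$ as $w\to p_1$ because $p_1\in\M$. Using the hypothesis that the segment $[p_1,p_2]$ is not contained in $\M$ (so the line meets $\M$ only transversally away from the axis), we can choose $w$ close to but distinct from $p_1$ to obtain $p_{n+1}\in\M\setminus\{p_1,p_2\}$ with $p_{n+1},p_1,p_2$ not colinear. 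By continuity $p_{n+1}-p_2 \to p_1-p_2$, and the assumption $\langle p_2'(0), p_2-p_1\rangle<0$ then gives
\[
\langle p_2'(0),\, p_{n+1}-p_2\rangle>0
\]
for all such $w$ sufficiently close to $p_1$.

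Next I would set up the application of the tangential-departure lemma. Let $\gamma:(-\varepsilon,\varepsilon)\to\M$ be a smooth local parametrisation of the curve $\M\cap S(p_1,|p_{n+1}-p_1|)$ with $\gamma(0)=p_{n+1}$, and write $\gamma_\pm(s)=\gamma(\pm s)$. By the extremality of $p_{n+1}$, $\gamma_\pm$ are tangent at $p_{n+1}$ to the sphere $S(p_2,|p_{n+1}-p_2|)$ and lie strictly outside the corresponding closed ball for $s>0$. Translating the ambient frame so that $p_2$ becomes the origin and setting $q=p_{n+1}-p_2$, both paths $s\mapsto \gamma_\pm(s)-p_2$ satisfy $\langle\gamma_\pm'(0),q\rangle=0$ and remain outside $B(0,|q|)$, so the preceding lemma produces continuous reparametrisations $t_\pm:[0,\delta']\to[0,\infty)$ with $t_\pm(0)=0$ such that $|p_2(t_\pm(s))-\gamma_\pm(s)| = |p_{n+1}-p_2|$.

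Finally I would invert $t_\pm$ and assemble the flex. A short Morse-type expansion of
\[
\phi(s,t):=|\gamma(s)-p_2(t)|^2-|p_{n+1}-p_2|^2
\]
shows $\phi(0,0)=0$, $\phi_s(0,0)=0$, $\phi_{ss}(0,0)>0$ (strict extremality) and $\phi_t(0,0)=-2\langle p_2'(0),p_{n+1}-p_2\rangle<0$, so the zero set of $\phi$ near the origin consists of two branches $s=s_\pm(t)\sim\pm\sqrt{At}$ with $A>0$; equivalently the lemma's $t_\pm(s)$ have the asymptotics $t_\pm(s)\asymp s^2$ and are strictly increasing on a small interval, hence invertible. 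Setting $q_2(t):=\gamma_+(s_+(t))$ and $q_4(t):=\gamma_-(s_-(t))$, both lie on $\M$, both are at distance $|p_{n+1}-p_1|$ from $p_1$ (as they lie on the chosen curve) and both are at distance $|p_{n+1}-p_2|$ from $p_2(t)$ (by construction), so $q(t)=(p_1,q_2(t),p_2(t),q_4(t))$ is a genuine flex of the closed hinge framework on $\M$. The asymptotic $|q_2(t)-q_4(t)|\sim 2|\gamma'(0)|\sqrt{At}$ is strictly positive for $t>0$ and vanishes at $t=0$, so it is nonconstant on every subinterval $[0,\delta)$. The main obstacle is the simultaneous quadratic invertibility of $t_\pm$; this is what the Morse-type expansion of $\phi$ (resting on strict extremality plus the step-two transversality inequality) is used to control.
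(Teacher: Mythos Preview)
Your approach is essentially the same as the paper's: pick an extremal point $q=p_{n+1}$ near $p_1$ so that the acute-angle inequality $\langle p_2'(0),\,p_2-q\rangle<0$ is inherited by continuity, parametrise the curve $\M\cap S(p_1,|p_1-q|)$ near $q$, and apply the tangential-departure lemma in each of the two tangent directions to build the two arms $q_2(t),q_4(t)$ of the hinge. The paper's proof stops there and simply asserts ``to create $q_2(t)$'' and ``the proof is complete''; you go further by giving the Morse-type expansion of $\phi(s,t)$ to invert the lemma's parameter change $s\mapsto t(s)$ and to extract the $\sqrt{t}$ asymptotics that force $|q_2(t)-q_4(t)|$ to be nonconstant---a point the paper does not address explicitly.

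One small caveat: your expansion uses $\phi_{ss}(0,0)>0$, i.e.\ \emph{second-order} nondegeneracy of the extremal point, whereas the paper's definition of extremal only gives a strict local minimum. For the circular cylinder this nondegeneracy does hold (the curve $\gamma$ is a smooth arc of an ellipse and the tangent sphere $S(p_2,|p_{n+1}-p_2|)$ meets it with genuine quadratic contact, not higher), but you should say so rather than invoke ``strict extremality'' as if it were automatic.
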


\begin{proof}
In view of the discussion above we may choose an extremal point $q$ for the point pair $p_1, p_2$ such that
\[
\langle p_2'(0), p_2-q \rangle< 0.
\]
For example, $q$ may be chosen close to $p_1$, as a local closest point to a point $w$ close to $p_1$.
Let $q(t), t \in [-1,1]$ be a parametrisation of the curve
\[
\M \cap S(p_1,|p_1-q|)\cap B(q,\delta)
\]
for appropriate $\delta>0$, as above.
Apply Lemma \ref{l:tangential} to the path pair $p_2(t), q(t), t \in [0,1]$,
to create $q_2(t)$. Similarly,  use the path pair $p_2(t),\\ q(-t), t \in [0,1]$
to create  $q_4(t)$ and the proof is complete.
\end{proof}


A similar hinge construction lemma holds
for frameworks supported on a union $\M$ of concentric cylinders $\M_i$.
The only new aspect is that the extremal point
$q$ must be chosen on a preassigned cylinder $\M_k$ of $\M$ and we must maintain the inequality
\[
\langle p_2'(0), p_2 - p_1 \rangle >0.
\]
when $p_1$ is replaced by $q$. Maintaining this inequality corresponds to
choosing $q$ in  the halfspace of points $z$ with
$
\langle p_2'(0), p_2 - z \rangle >0.
$
To see that this is possible note
that the line of points $w_t=p_2 + t(p_1-p_2), t \in \bR$,
is not parallel to the common cylinder axis (by assumption) and also that the line
is not orthogonal to $p_2'(0)$. Thus for all $t$ large $w_t$ lies in the half space.
Since the cylinder $\M_k$ passes through the half space it follows (from simple geometry)
that for large enough $t$ the closest point $q_t$ on $\M_k$ to $w_t$ will also lie in the half space, as required.

\section{Combinatorial Characterisations of Rigid\\ Frameworks}
\label{sec:combcharrf}

%
%
%

We now obtain variants of Laman's theorem for bar-joint frameworks constrained to parallel planes, to concentric spheres
and to concentric cylinders. In each case the proof scheme is the same.

\begin{thm}\label{t:lamanplanesandcylinders}
Let
$\M_1, \M_2, \dots ,\M_N$ be parallel planes or concentric spheres in $\bR^3$
with union $\M$, let $G$ be a simple connected graph and let $\pi :V \to \{1,\dots ,N\}$. Then $G$ admits a minimally rigid completely regular framework $(G,p)$ on $\M$, with $p_k \in \M_{\pi(v_k)}$ for each $k$, if and only if $G$ is a Laman graph.
\end{thm}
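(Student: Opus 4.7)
The plan is to prove the two implications separately: necessity is a direct application of the Maxwell-style counting proposition, while sufficiency proceeds by induction on a Henneberg derivation.

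For necessity, suppose $(G,p)$ is a minimally rigid completely regular framework on $\M$ with $p_k \in \M_{\pi(k)}$. Since $\M$ is a union of parallel planes (or concentric spheres), the rigid motions of $\bR^3$ that preserve $\M$ are three-dimensional (two translations plus one rotation in the planes case; three rotations about the common centre in the spheres case). For any separated completely regular framework this gives $d(\M,p)=3$, and likewise $d(\M,p')=3$ for every subgraph $G'$ with $|V(G')|\geq 2$. Proposition \ref{p:minrigidnec} then yields $2|V|-|E|=3$ and $2|V'|-|E'|\geq 3$ for every subgraph with at least one edge, which is the Laman condition.

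For sufficiency, I would proceed by induction using Proposition \ref{p:lamangraph}, which expresses the given Laman graph $G$ as $G_0 \to G_1 \to \dots \to G_n = G$ with $G_0 = K_2$ and each step a Henneberg 1 or Henneberg 2 move. The base case is immediate: place the two vertices of $K_2$ at generic points on $\M_{\pi(v_1)}$ and $\M_{\pi(v_2)}$ respectively; the resulting framework is trivially minimally rigid, completely regular, and geometrically generic. At each inductive step, I maintain the class of minimally continuously rigid geometrically generic completely regular frameworks on $\M$ respecting the partial $\pi$-assignment.

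The Henneberg 1 step is essentially elementary: when adding a degree-two vertex $v$, pick $p_v \in \M_{\pi(v)}$ so that the two new edge-length constraints together with the surface constraint yield a framework of augmented-rigidity-matrix rank increased by 2, and so that no geometric genericity condition is violated (a non-empty Zariski-open condition on $p_v$). The Henneberg 2 step is handled directly by Lemma \ref{l:hennspheres} applied with target surface $s = \pi(v_{n+1})$: it produces the required framework with the new vertex on $\M_s$ and with the old framework points shifted only slightly (staying on their respective components $\M_{\pi(v_i)}$, since the continuous flex of $(G_k\setminus e, p^{(k)})$ underlying the construction takes values in $\M^n$ and each component of $\M$ is a connected manifold).

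The main technical obstacle is of course the Henneberg 2 step, but this has already been packaged into Lemma \ref{l:hennspheres}, whose proof relies on the hinge-opening construction of Lemma \ref{l:hinge} together with the extremal-point placement of Lemma \ref{l:spherehingelemma}. The remaining bookkeeping is to verify at each inductive step that geometric genericity and complete regularity are preserved, which follows because the relevant failure conditions (colinearity with a pair, antipodality, tangent-plane coincidences, extremality of separations, rank drop of the augmented edge function) each cut out a proper algebraic subset of the available placement space, so a generic choice avoids them all.
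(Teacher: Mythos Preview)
Your proposal is correct and follows essentially the same approach as the paper: necessity via Proposition \ref{p:minrigidnec} with $d(\M,p)=3$, and sufficiency by Henneberg induction from $K_2$ using Proposition \ref{p:lamangraph}, with the Henneberg 2 framework step supplied by Lemma \ref{l:hennspheres}. Your treatment is in fact more explicit than the paper's own proof in tracking the $\pi$-assignment and in spelling out why complete regularity and geometric genericity are maintained at each step.
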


\begin{proof} Section \ref{sec:fonsurface} shows that the
Laman counting conditions are necessary. For sufficiency
note that there are minimally rigid completely regular frameworks $(K_2,p)$ with
$p_1, p_2$ placed on any pair $\M_i, \M_j$. It is entirely elementary, as made explicit for the case of a cylinder below, that the Henneberg $1$ move preserves minimal rigidity. Thus the constructions of the last section together with the graph theory of Section \ref{sec:grapht} lead to the stated framework realisations if $G$ is a Laman graph.
\end{proof}

We now turn to the proof of a Laman theorem for the cylinder for which we require the following matricial companion to the rigid graph extension move and an elementary argument for the Henneberg $1$ move.

\begin{lem} \label{l:rigidmatrixextension}
Let $\M$ be a union of concentric cylinders in $\bR^3$. Let $H$ be a subgraph of the simple connected graph $G$ such that $K=G/H$ is simple and suppose that $G, H, K$ are $(2,2)$-tight. Suppose that for $H$ and $K$  all completely regular framework realisations on $\M$ are isostatic. Then the same is true of  $G$.
\end{lem}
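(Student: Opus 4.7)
I will prove the lemma by reducing isostaticity of a completely regular realisation $(G,p)$ on $\M$ to the invertibility of a single square matrix, and then exhibiting a specific realisation where this matrix is invertible using the hypothesis on $K$. Ordering the columns of $R(G,p,\M)$ so that the $V(H)$-coordinates come first, and grouping the rows into those for $E(H)\cup V(H)$, the cross-edges $E(G)\setminus E(H)$, and the tangency rows for $V(G)\setminus V(H)$, gives the block decomposition
\[
R(G,p,\M)=\begin{pmatrix} R(H,p|_H,\M) & 0 \\ A_e & B_e \\ 0 & R_v\end{pmatrix}.
\]
Since complete regularity descends to subframeworks, $(H,p|_H)$ is completely regular and hence isostatic by hypothesis; the top block therefore has full row rank $3|V(H)|-2$, with $2$-dimensional kernel equal to the restriction to $V(H)$ of the ambient rigid motions of the cylinder.

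A Maxwell count using the three identities $2|V(G)|-|E(G)|=2|V(H)|-|E(H)|=2|V(K)|-|E(K)|=2$ makes the bottom-right block $N:=\bigl(\begin{smallmatrix}B_e\\R_v\end{smallmatrix}\bigr)$ square of size $3|V(G)\setminus V(H)|$, and a standard block argument exploiting that the extension of an ambient rigid motion to all of $V(G)$ gives a particular solution of the cross-edge equations yields $\dim\ker R(G,p,\M)=2+\dim\ker N$. Thus $(G,p)$ is isostatic precisely when $N$ is invertible. The entries of $N$ are polynomials in the framework coordinates, and since the completely regular locus in $\M^{|V(G)|}$ coincides with the open set on which the augmented edge function of $G$ attains its maximal rank, to conclude that $N$ is invertible at every completely regular $(G,p)$ it suffices to produce a single framework vector $p^{(0)}$ at which $\det N(p^{(0)})\ne 0$.

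To produce such a $p^{(0)}$, I pick a completely regular (hence isostatic) realisation $(K,p_K)$ on $\M$ with $v_*$ placed at a point $p_*\in\M$ off the cylinder axis, and form $p^{(0)}$ by keeping the $V(G)\setminus V(H)$-vertices as in $p_K$ and placing the $V(H)$-vertices at distinct points of $\M$ close to $p_*$, chosen so that $(H,p^{(0)}|_{V(H)})$ is completely regular. Deleting the three $v_*$-columns and the now-zero $v_*$-vertex row from $R(K,p_K,\M)$ yields a square matrix $M$ of the same size as $N$, invertible because the only ambient infinitesimal isometry of the cylinder vanishing at a non-axial point is zero. Here the simplicity of $K=G/H$ — which gives each $w\in V(G)\setminus V(H)$ at most one $H$-neighbour $v(w)$ — is crucial: it forces $N$ and $M$ to agree row-for-row except on the cross-edge rows, where $N$ uses $p_w-p_{v(w)}$ in place of $M$'s $p_w-p_*$, so that as the $V(H)$-vertices coalesce to $p_*$ the matrix $N$ converges entrywise to $M$ and by continuity $\det N(p^{(0)})\ne 0$ once the $V(H)$-vertices are sufficiently close to $p_*$.

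The main obstacle I anticipate is the careful bookkeeping in the third paragraph: one must verify that the row-for-row correspondence between $N$ and $M$ is an exact consequence of the simplicity of $K$ (any multi-edge in $G/H$ would spoil the coalescing limit), and one must justify carefully that the completely regular locus of $(G,\cdot)$ on $\M^{|V(G)|}$ is indeed the open locus of maximal augmented-edge-function rank — this uses irreducibility/connectedness of $\M^{|V(G)|}$ for the cylinder and upper semi-continuity of rank — so that invertibility of $N$ at one framework vector transfers to invertibility of $N$, and hence isostaticity, at every completely regular $(G,p)$.
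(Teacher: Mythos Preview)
Your argument is correct and follows the same overall strategy as the paper: block-decompose $R(G,p,\M)$ with the $V(H)$-columns first, use isostaticity of $(H,p|_H)$ to kill $u_H$ and reduce the question to the bottom-right block, and then relate that block to the rigidity matrix of $K=G/H$ by sending the $V(H)$-vertices to a single point $p_*$.

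The execution differs in one respect worth noting. The paper argues by contradiction: starting from a non-rigid-motion flex $u$ with $u_H=0$, it specialises the $V(H)$-coordinates \emph{exactly} to $p_r=p_*$, performs column operations on the lower block $X$, and asserts the chain
\[
R(G/H,p',\M)(0,u_{G\backslash H}) \;=\; X(\{p_i'\})(0,u_{G\backslash H}) \;=\; X(\{p_i\})(0,u_{G\backslash H}) \;=\; 0.
\]
You instead keep the $V(H)$-vertices distinct but close to $p_*$, form the concrete square matrix $M$ obtained from $R(K,p_K,\M)$ by deleting the $v_*$-columns and the resulting zero $v_*$-tangency row, and use continuity of $\det N$ to transfer invertibility of $M$ to $N(p^{(0)})$. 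This is arguably the cleaner route: the middle equality in the paper's chain is delicate, because the specialisation $p_i\to p_r$ for $i<r$ also changes the $v_j$-column entries $p_j-p_i$ of the cross-edge rows, not just the $v_r$-column, so that step is not an identity of matrices acting on the fixed vector $(0,u_{G\backslash H})$. Your limit argument, combined with the observation that one invertible instance forces maximal rank on the whole regular locus, sidesteps this issue entirely.

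A minor remark: since $\M$ is a union of positive-radius concentric cylinders, every $p_*\in\M$ is automatically off the axis, so your non-axial hypothesis is free.
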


\begin{proof}
Let $(G,p)$ be completely regular for $\M$ and let $n$ = $|V(G)|$. Let $v_*$ be a fixed vertex of $H$.
Consider the rigidity matrix $R_{\M}(G,p)$ with column triples in the order
of  $v_1, \dots ,v_{r-1}, v_*, v_{r+1}, \dots , v_n$
where $v_1, \dots ,v_{r-1},$   $v_r=v_*$ are the vertices of $H$.
Order the rows of $R_{\M}(G,p)$ in the order of the edges $e_1, \dots ,e_{|E(H)|}$
for $H$
followed by the $n$ rows of the block diagonal matrix whose
diagonal entries are the vectors
$h_1(p_1), \dots , h_n(p_n)$ in $\bR^3$,
followed by the remaining rows for the edges of $E(G)\setminus E(H)$.
Note that the submatrix formed by the first  $|E(H)|+r$ rows  is the
$1$ by $2$ block matrix $[R_\M(H,p)\,  0]$.

Suppose, by way of contradiction that $G$
is not isostatic. Since $2n-|E|=2$ there is a vector $u$ in the kernel
of  $R_{\M}(G,p)$ which is not a rigid motion (infinitesimal) flex.
By adding to $u$ some rigid motion flex we may assume that $u_r=0$.
Write $u=(u_H, u_{G\setminus H})$ where $u_H = (u_1, \dots ,u_r)$.
The matrix $R_{\M}(G,p)$ has the block form
\[
R_{\M}(G,p)=\begin{bmatrix}
R_\M(H,p)&0\\X_1& X_2
\end{bmatrix}
\]
where $X =[X_1\, X_2]$ is the matrix formed by the last $|E(G)|- |E(H)|+n-r$
rows. Since $(H,p)$ is isostatic on $\M$ and $R_\M(H,p)u_H=0$ it follows that
$u_H$ is a rigid motion infinitesimal flex. But the coordinate $u_r
=0$ and so  $u_H =0$.

Consider now the framework vector
\[ p'=(p_r,\dots,p_r,p_{r+1},\dots,p_n)\]
in which the first $r$ framework vertices are specialised to $p_r$ and let
\[ p_*=(p_r,p_{r+1},\dots,p_n) \]
be the reduced length framework vector with associated generic framework $(G/H,p_*)$. By the hypotheses this framework is infinitesimally rigid.

The matrix $X_2=X_2(p)$ is square with nonzero vector $u_{G\setminus H}$ in the kernel and so the determinant as a polynomial in the coordinates of the $p_i$ vanishes identically. It follows that $\det X_2(p')$ vanishes identically and that there is a nonzero vector, $v_{G\setminus H}$ say, in the kernel. But now we obtain the contradiction
\[ R_\M(G/H,p_*)((0,0,0),v_{G\setminus H}) =  ((0,0,0),X_2(p'))((0,0,0),v_{G\setminus H})=0.\]
%
%
\end{proof}

An alternative proof of Lemma
\ref{l:rigidmatrixextension}
can be given which is based on a continuity argument and the fact that the infinitesimal flexibility of a single generic framework $(G,q)$ on $\M$ ensures the infinitesimal flexibility of all generic frameworks $(G,q')$ on $\M$. Consider a sequence $p_N$ of generic framework vectors converging to the specialised framework vector $p'$. Arguing by contradiction one obtains a sequence of unit norm flexes $(0,u_N)$ for the generic frameworks $(G,p_N)$ which, by the compactness of the set of unit norm displacements, provides a unit norm flex $u_*=(0,u)$ for the degenerate framework $(G,p')$. This infinitesimal flex gives an infinitesimal flex of $(G/H,p_*)$, contrary to the hypotheses.

For notational convenience in the following lemma we take $\M$ to be the cylinder defined by $x^2+y^2=1$ in $\bR^3$.

\begin{lem}\label{cylinderhen1}
Let $G$ be $(2,2)$-tight and let $(G,p)$ be a minimally rigid regular framework on the cylinder $\M$ with $p=(p_1,\dots,p_n)$ and vertices $v_1,\dots,v_n$. Let $G \rightarrow G'$ be a Henneberg $1$ move which adds the vertex $v_{n+1}$ and edges $v_1v_{n+1},v_2v_{n+1}$. Then there is a regular minimally rigid realisation $(G',p')$ on $\M$ where $p'=(p,p_{n+1})$.
\end{lem}

\begin{proof}
Let $p_i=(x_i,y_i,z_i)$ and for $a=x,y,z$ let $a_{i,j}$ denote the difference $a_i-a_j$. $R_{\M}(G',p')$ has the following form (where the unfilled block matrix in the top left corner is the rigidity matrix $R_\M(G,p)$):

\begin{equation*}
\setcounter{MaxMatrixCols}{20}
\begin{bmatrix}
&&&&&&& 0 & 0 & 0\\
&&&&&&& \vdots & \vdots & \vdots\\
&&&&&&& 0 & 0 & 0\\
x_{1,n+1} & y_{1,n+1} & z_{1,n+1} & 0 & 0 & 0 & \dots & x_{n+1,1} & y_{n+1,1} & z_{n+1,1} \\
0 & 0 & 0 & x_{2,n+1} & y_{2,n+1} & z_{2,n+1} & \dots & x_{n+1,2} & y_{n+1,2} & z_{n+1,2} \\
0 & 0 & 0 & 0 & 0 & 0 & \dots & 1 & 1 & 0
\end{bmatrix}
\end{equation*}

By the structure of $R_{\M}(G',p')$, the minimal rigidity of $(G,p)$ and the regularity of $p'$ the proof is completed by noting that the $3$ by $3$ matrix in the bottom right hand corner has rank $3$.
\end{proof}

\begin{thm}\label{t:lamancylinder}
Let $\M$ be a circular cylinder in $\bR^3$ and let $G$ be a simple connected graph. Then $G$ admits a minimally rigid completely regular framework $(G,p)$ on $\M$ if and only if $G$ is $(2,2)$-tight.
\end{thm}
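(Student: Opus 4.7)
The necessity direction follows directly from Proposition \ref{p:minrigidnec}: for a separated framework on the cylinder $\M$ the number of ambient degrees of freedom is $d(\M,p)=2$, so minimal continuous rigidity of a completely regular realisation forces $2|V|-|E|=2$ and $2|V(G')|-|E(G')|\geq 2$ on every subgraph with an edge; subgraphs without edges trivially satisfy $2|V'|\geq 2$. Hence $G$ is maximally independent of type $2$.

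For the sufficiency direction the plan is to invoke Theorem \ref{t:type2graph} and show that the family
\[
\C=\{G:G\text{ admits a minimally rigid completely regular framework on }\M\}
\]
contains $K_4$ and is Henneberg complete, so that it contains every maximally independent graph of type $2$ with at least one edge (the exceptional $K_1$ is realised by a single point on $\M$). First I would exhibit a completely regular realisation of $K_4$ on $\M$: place four points in geometrically generic position; the count $2|V|-|E|=2=d(\M,p)$ together with the last Remark of Section 3 shows that $\dim_\M(K_4,p)=2=d(\M,p)$, so $(K_4,p)$ is isostatic. Next, the Henneberg $1$ move is handled by an elementary transversality argument: a new vertex with two edges is a generic solution of two distance equations restricted to the $2$-dimensional manifold $\M$, which preserves complete regularity and the rank count of the rigidity matrix.

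The main work is the Henneberg $2$ move, which I would run along the lines of Proposition \ref{p:hennplane} and Lemma \ref{l:hennspheres}, but with the cylinder ingredients. Let $G\to G'$ break the edge $e=(v_1,v_2)$ and add $v_{n+1}$ adjacent to $v_1,v_2,v_3$. Starting from a minimally rigid completely regular $(G,p)$, the depleted framework $(G\setminus e,p)$ has $\dim_\M(G\setminus e,p)=3$, so one has a differentiable flex $p(t)$ on $\M$. By Theorem \ref{t:rigiditythm} the failure of continuous rigidity is detected by an infinitesimal flex; combining this with genericity of $p$ lets me choose the flex so that the pair $p_1(t),p_2(t)$ moves with nontangential separation, i.e.\ $\langle p_2'(0)-p_1'(0),p_2-p_1\rangle<0$. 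Lemma \ref{l:cylinderhinge} then produces an extremal point $p_{n+1}\in\M$ at which the closed hinge framework $H(p_1,p_{n+1},p_2,p_{n+1})$ has a flex with $|q_2(t)-q_4(t)|$ nonconstant on every small interval. Lemma \ref{l:hinge} applied with $v(t)=p_3(t)$ then shows that, by a generic choice of $p_{n+1}$ (outside the exceptional normal-plane locus relative to the path of $p_3$), at least one of the two hinge branches has $|p_3(t)-p_{n+1}(t)|$ nonconstant on all small intervals. Adding the edge $(v_3,v_{n+1})$ at a small $t>0$ then kills the remaining degree of freedom, and openness of the completely regular locus ensures that $(G',p')$ is completely regular. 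Finally, subgraph extensions are handled directly by Lemma \ref{l:rigidmatrixextension}.

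The hard part is the Henneberg $2$ construction on the cylinder: unlike the sphere or plane case of Lemma \ref{l:hennspheres}, the cylinder leaves no rigid-motion freedom to normalise the relative motion of $p_1(t)$ and $p_2(t)$, so one cannot simply arrange a radial approach. The technical substitute is the nontangential separation condition, and justifying it requires the equivalence of continuous and infinitesimal rigidity (Theorem \ref{t:rigiditythm}) applied to the hypothesis that $(G,p)$ is minimally rigid before deletion. Once this is in hand, the extremal-point placement of $p_{n+1}$ supplied by Lemma \ref{l:cylinderhinge}, combined with the generic avoidance provided by Lemma \ref{l:hinge}, closes the induction and, together with Lemma \ref{l:rigidmatrixextension} and Theorem \ref{t:type2graph}, completes the proof.
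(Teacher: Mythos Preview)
Your proposal is correct and follows essentially the same route as the paper: necessity via Proposition \ref{p:minrigidnec}, sufficiency by showing that the class $\C$ of graphs with isostatic completely regular realisations on the cylinder contains $K_4$, is closed under Henneberg $1$ and $2$ moves (the latter via the hinge and extremal-point Lemmas \ref{l:hinge} and \ref{l:cylinderhinge}, with the nontangential separation condition secured through Theorem \ref{t:rigiditythm}), and is closed under subgraph extensions by Lemma \ref{l:rigidmatrixextension}, so that Theorem \ref{t:type2graph} applies. The paper's own proof compresses the Henneberg $2$ step to a citation of ``the results of the last section'' and justifies $K_4\in\C$ slightly differently (noting that a generic $(K_4,p)$ is already minimally rigid as a \emph{free} framework in $\bR^3$), but the logical architecture is identical to yours.
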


\begin{proof} Note that the necessity of the condition on the graph follows from Proposition \ref{p:minrigidnec}.

For the sufficiency, first observe that the singleton graph and $K_4$ both have minimally rigid completeley regular realisations on $\M$. 
Theorem \ref{t:type2graph} implies that to complete the proof we need only show that the Henneberg 1, Henneberg 2 and extension operations preserve
minimal rigidity. This is the content of Lemma \ref{cylinderhen1}, the results of the last section and Lemma \ref{l:rigidmatrixextension} 
respectively.

\end{proof}

From the discussion in the last section we also obtain a similar combinatorial characterisation for frameworks on concentric cylinders, with statement and proof in the style of Theorem \ref{t:lamanplanesandcylinders}.
The final ingredient of the proof is to show that Lemma \ref{l:rigidmatrixextension} also holds for the reducible manifold formed by a finite number of concentric cylinders. The proof is as before with the following appropriate form of generic point for $\M$.

Recall first that for the irreducible case, with $p=(p_1,\dots,p_n) \in \M^n \subset \bR^{3n}$, the genericness of $p$ for $\M$ amounts to consideration of the quotient ring $\bQ[x_1,y_1,z_1,\dots,x_n,y_n,z_n]/I$ where $I$ is the maximal ideal generated by the polynomials $h(x_i,y_i,z_i)=0$ defining $\M$. Thus $p$ is generic if the associated field of fractions is isomorphic to $\bQ(p)$. Equivalently, $p$ (and thus $(G,p)$) is generic if the transcendence degree of the field extension $\bQ(p):\bQ$ is $2n$.

The reducible case is similar. Take a surface $\M=\M_1 \cup \M_2$ defined by a product of irreducible rational polynomials $h_i$ so the varieties $V(h_1) \cong \M_1$ and $V(h_2) \cong \M_2$ are irreducible. Let $p=(p_1,p_2)$ for $p_1=(p_{1,1},\dots,p_{1,n}) \in \M_1^n \subset \bR^{3(m+n)}$ and $p_2=(p_{2,1},\dots,p_{2,m}) \in \M_2^m \subset \bR^{3(m+n)}$ (for $n,m \neq 0$) and let $p_{i,j}=(x_{i,j},y_{i,j},z_{i,j})$. For the corresponding indeterminates we have the tensor product decomposition of the quotient ring
\[ \frac{\bQ[x_{1,1},y_{1,1},z_{1,1},\dots, x_{1,n},y_{1,n},z_{1,n},x_{2,1},y_{2,1},z_{2,1},\dots, x_{2,m},y_{2,m},z_{2,m}]}{\langle h_{1,1},\dots,h_{1,n},h_{2,1},\dots,h_{2,m} \rangle}\cong\]
\[ \frac{\bQ[x_{1,1},y_{1,1},z_{1,1},\dots, x_{1,n},y_{1,n},z_{1,n}]}{\langle h_{1,1},\dots,h_{1,n} \rangle} \otimes \frac{\bQ[x_{2,1},y_{2,1},z_{2,1},\dots, x_{2,n},y_{2,n},z_{2,m}]}{\langle h_{2,1},\dots,h_{2,m} \rangle} \]
with each factor ideal prime and hence these integral domains have fields of fractions $\bF_1,\bF_2$ providing the field of fractions $\bF_1 \otimes \bF_2$. The $(n+m)$-tuple $p$ is said to be generic on $\M$ if $\bQ(p) \cong \bF_1 \otimes \bF_2 \cong \bQ(p_1) \otimes \bQ(p_2)$ or equivalently if $\bQ(p)$ has transcendence degree $2(n+m)$ over $\bQ$. Generalising this definition to reducible surfaces with $k$ irreducible components is purely notational.

We note that this definition of genericness is necessarily stronger than taking $p_i$ to be generic on $\M_i$. There are frameworks on reducible surfaces in which each one-manifold sub-framework is generic yet the complete framework is not even regular. For an example, take concentric cylinders and a framework with radial points.

The theorems, of course, extend to the case of rigid frameworks for graphs which contain spanning subgraphs that are $(2,2)$-tight.

\begin{rem}
{\rm
We note that
Whiteley \cite{Whi5} discusses  analogous results for frameworks on the flat (geodesic) cylinder and other flat spaces.
The cylinder context considered  concerns  infinitesimal rigidity
on the cyclic plane and the infinitesimal motion equations derive from equations in the plane. While this keeps some aspects of the rigidity matrix analogous to the plane there is the added feature of geodesic edges which wrap around the cylinder
(for which $k$-frame matroids are introduced to play a role).
}
\end{rem}

\subsection{Cone Graphs}
We say that a graph  $G=(V,E)$ is a \textit{cone graph}
if there is at least one  distinguished vertex $v$ which is adjacent to every other vertex.
The following corollary and its plane variant indicated below is due to Whiteley \cite{Whi2}.
It is well-known that the equivalence of (i) and (ii) does not hold in general
(as the so called double banana graph reveals).

\begin{cor}
Let $G=(V,E)$ be a cone graph. Then the following statements are equivalent.
\begin{enumerate}
\item[(i)] $G$ is $(3,6)$-tight, that is $3|V|-|E| = 6$ and $3|V'|-|E'|  \geq 6$ for every subgraph $G'=(V',E')$ with $|V'|>2$.
\item[(ii)] There is a minimally rigid completely regular framework realisation $(G,p)$ in $\bR^3$.
\end{enumerate}
\end{cor}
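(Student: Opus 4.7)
The plan is to reduce this cone corollary to the concentric-spheres case of Theorem \ref{t:lamanplanesandcylinders}, via the classical coning principle: fixing the apex turns the cone edges into sphere constraints for the remaining vertices. Let $v$ be the distinguished (apex) vertex of $G$ and put $H = G\setminus v$; I will show that (i) and (ii) for $G$ each translate into the corresponding Laman statement for $H$.

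First I would do the combinatorial translation. Writing $|V(G)|=|V(H)|+1$ and $|E(G)|=|E(H)|+|V(H)|$ (since $v$ is joined to every other vertex), one has
\[
3|V(G)|-|E(G)| \;=\; 2|V(H)|-|E(H)|+3.
\]
Hence the Maxwell count $3|V(G)|-|E(G)|=6$ is equivalent to $2|V(H)|-|E(H)|=3$. For the subgraph condition I would argue in both directions. Given a subgraph $H'\subseteq H$ with at least one edge, the \emph{cone closure} $G'=(V(H')\cup\{v\},\,E(H')\cup\{(v,w):w\in V(H')\})$ is a subgraph of $G$ with $|V(G')|=|V(H')|+1>2$, and the identity above applied to $G'$ yields the Laman bound $2|V(H')|-|E(H')|\geq 3$. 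Conversely, given any subgraph $G'\subseteq G$ with $|V(G')|>2$, I would split according to whether $v\in V(G')$: if $v\notin V(G')$ then $G'\subseteq H$ and the Laman count for $H$ gives $|E(G')|\leq 2|V(G')|-3\leq 3|V(G')|-6$; if $v\in V(G')$ with $k$ edges from $v$, then applying Laman to $(V(G')\setminus\{v\},\,E(G')\cap E(H))$ and using $k\leq |V(G')|-1$ yields $3|V(G')|-|E(G')|\geq 6$. Thus (i) is equivalent to $H$ being a Laman graph.

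Next I would establish the geometric translation. Given a completely regular realisation $(G,p)$ in $\bR^3$, normalise by a rigid motion so that $p_v$ sits at a prescribed point (this absorbs three of the six rigid-motion degrees of freedom and can be arranged by a small translation without losing complete regularity). The cone edges $(v,v_i)$ then force each $p_i$ to lie on the sphere $\M_i$ of radius $|p_i-p_v|$ centred at $p_v$, so $(H,p|_H)$ becomes a framework on the union $\M=\bigcup_i\M_i$ of concentric spheres, and conversely. Writing the augmented rigidity matrix $R(H,p|_H,\M)$ and comparing it to the free rigidity matrix $R(G,p)$ (with the apex coordinates pinned) shows that the rows for the cone edges $(v,v_i)$ in $R(G,p)$ coincide, up to scalar, with the normal-equation rows $\tfrac12 Dh_i(p_i)$ used to constrain $p_i$ to $\M_i$. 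Consequently the kernels match up to the three rotations about $p_v$, and $(G,p)$ is isostatic in $\bR^3$ precisely when $(H,p|_H)$ is isostatic on $\M$. Complete regularity passes between the two descriptions because one picture is obtained from the other by an elementary row change together with the specialisation $p_v=\text{const}$, which is a generic condition on the cone realisation. The result then follows immediately from Theorem \ref{t:lamanplanesandcylinders} applied to $\M$: $H$ admits a minimally rigid completely regular realisation on a union of concentric spheres if and only if $H$ is Laman, which by the combinatorial step is equivalent to (i).

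I expect the main obstacle to be a careful justification of the equivalence of kernels in the matricial step, ensuring that the three rotational rigid motions about $p_v$ are exactly the gap between the two rigidity matrices, and that complete regularity is preserved in both directions. One must also verify the small-graph base cases ($|V(G)|\leq 2$) by hand and check that the hypothesis $|V'|>2$ in (i) does not artificially exclude the subgraphs of $H$ with $|V(H')|\geq 2$ that are needed for Laman independence; this is what the cone-closure trick is designed to handle.
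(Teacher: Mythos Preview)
Your proposal is correct and follows essentially the same route as the paper: reduce the cone framework to a framework on concentric spheres by fixing the apex, and then invoke Theorem~\ref{t:lamanplanesandcylinders}. The paper's proof is a single terse sentence recording the bijection between $V_{\bR^3}(G,p)$ with $q_1=p_1$ fixed and $V_\M(G\backslash v,(p_2,\dots,p_n))$; your version simply makes explicit the two ingredients the paper leaves implicit, namely the combinatorial equivalence ``$G$ maximally independent for $\bR^3$ $\Leftrightarrow$ $G\backslash v$ Laman'' and the matricial identification of cone-edge rows with sphere-normal rows.
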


\begin{proof}
One can readily see that, with cone vertex $v_1=v$ and $G_0=G\setminus v$,
the set of points $q$ in
$V_{\bR^3}(G,p) \subseteq \bR^{3n}$ with fixed ``centre''  $q_1=p_1$  is in bijective isometric correspondence with the variety $V_\M(G_0, (p_2,\dots ,p_n))$ where $\M$ is
the union of the $p_1$-centred spheres $S(p_1,|p_k-p_1|)$, for $k=2,\dots ,n$. The stated equivalence follows from this correspondence.
\end{proof}

There is a companion result for free bar-joint frameworks in $\bR^3$ subject to the family of constraints that all points are a specified distance from a single plane.  This follows from the parallel planes Laman theorem above. With the plane playing the role of a vertex, for the purposes of counting, the counting requirement is as above.

In a similar way we obtain from the concentric cylinders theorem the following corollary.

Let $G=(V,E)$ be a cone graph with $|V|=n+1$ and with
distinguished cone vertex $v_{n+1}$.
Let $p=(p_1, \dots , p_n)$ be a framework vector, as usual
and let  $p_*$
be a straight line.
Then the triple $(G, p, p_*)$
is a point-line-distance framework  for the cone graph $G$.

In general a point-line graph is a graph $G=(V,E)$ in which we distinguish a bipartition of the vertices $V=V_p \cup V_l$ and a bipartition of the edges $E=E_{pp} \cup E_{pl}$. The notation arises from the point line frameworks context where $v \in V_p$ becomes a point and $v \in V_l$ a line, $e \in E_{pp}$ represents a standard edge (or bar) and $e \in E_{pl}$ represents an edge joining a point to a line. Note that we allow no edges joining two lines and that in the following corollary $|V_l|=1$.

A line has $4$ degrees of freedom and so the natural class
of simple graphs for ``typical'' general point-line distance frameworks are those for which
\[
3|V_p|+4|V_l|-|E| =6
\]
and
\[ 3|V_p(X)|+4|V_l(X)|-|E(X)| \geq 6\]
for any subgraph $X \subset G$ with at least one edge.
We refer to such graphs
as \emph{maximally independent point-line graphs}.
One can define the infinitesimal rigidity of general point-line-distance
frameworks and also generic frameworks in a natural way.

\begin{cor}
Let $G=(V,E)$ be a cone graph, viewed as a point-line graph
$G=(V_p\cup V_l, E) $ with
a single line corresponding to the cone vertex.
Suppose also that the subgraph induced by $V_p$ is connected with at least $4$ points.
Then the following statements are equivalent:
\begin{enumerate}
\item[(i)] $G$ is a maximally independent point-line graph.
\item[(ii)] There is a minimally infinitesimally rigid point-line framework realisation $(G,p)$ in $\bR^3$.
\end{enumerate}
\end{cor}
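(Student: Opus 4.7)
The plan is to reduce the corollary to the concentric-cylinders analogue of Theorem~\ref{t:lamancylinder} by exploiting the fact that, once the cone line $\ell$ is fixed in space, each point-line edge forces the corresponding framework point to lie on a fixed coaxial cylinder about $\ell$. Thus a point-line cone framework with a single line becomes, after quotienting by the four line-degrees of freedom, a bar-joint framework on a union of concentric cylinders, and the combinatorial count for point-line rigidity collapses to the maximally-independent type~$2$ count on the deleted graph.

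First, I would normalise by placing $\ell$ along the $z$-axis, which accounts for the four degrees of freedom of $\ell$ in the Grassmannian of lines in $\bR^3$. Each cone edge $(v_i,v_{n+1})$ then prescribes a distance $r_i$ from $p_i$ to the $z$-axis, so $p_i\in\M_{r_i}$ where $\M_{r_i}$ is the vertical cylinder of radius $r_i$. The union $\M=\bigcup_i\M_{r_i}$ is a union of (finitely many distinct) concentric cylinders, and if $G_0=G[V_p]$ denotes the subgraph induced on $V_p$ then the point-point edges of $G$ endow $(G_0,p)$ with the structure of a framework on $\M$.

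Next, I would verify that the counting conditions transfer. With $|E_0|=|E|-|V_p|$ (each point is incident to exactly one cone edge), the defining equality transforms as
\[
3|V_p|+4-|E|=6 \iff 2|V_p|-|E_0|=2.
\]
A similar translation handles subgraphs: any subgraph $H_0\subseteq G_0$ extends to a subgraph $H\subseteq G$ by adjoining $v_{n+1}$ together with all cone edges incident to $V(H_0)$, and under this extension the point-line sub-inequality $3|V(H_0)|+4-|E(H)|\geq 6$ is the same as $2|V(H_0)|-|E(H_0)|\geq 2$. Conversely, any subgraph of $G$ containing $v_{n+1}$ may be extended to include all cone edges on $V_p\cap V(H)$ without weakening the inequality, and subgraphs omitting $v_{n+1}$ reduce to subgraphs of $G_0$. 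Hence $G$ is a maximally independent point-line cone graph if and only if $G_0$ is maximally independent of type~$2$.

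Third, I would establish the corresponding equivalence of infinitesimal rigidity. Infinitesimal flexes of $(G,p)$ which fix $\ell$ correspond bijectively to tangent assignments $u=(u_1,\dots,u_n)$ with $u_i$ tangent to $\M_{r_i}$ satisfying the point-point edge equations, that is, to infinitesimal flexes of $(G_0,p)$ on $\M$ in the sense of Definition~\ref{d:infrigid}. The rigid motions of $\bR^3$ fixing $\ell$ are precisely the rotations about $\ell$ and translations along $\ell$, a two-dimensional group; the hypothesis that $G_0$ is connected with $|V_p|\geq 4$ and that the framework is completely regular ensures $d(\M,p)=2$, so these exhaust $\ker R(K_{|V_p|},p,\M)$. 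Consequently $(G,p)$ is minimally infinitesimally rigid as a point-line framework if and only if $(G_0,p)$ is isostatic on $\M$, and the concentric-cylinders extension of Theorem~\ref{t:lamancylinder} noted in the paragraph following its proof delivers the equivalence of (i) and (ii). The main obstacle I anticipate is the careful bookkeeping of the rigid-motion subspace — in particular, showing that fixing $\ell$ in the point-line setting precisely quotients out four of the six Euclidean symmetries and leaves the remaining two as exactly the continuous symmetries of $\M$ — together with checking that completely regular realisations on the two sides correspond.
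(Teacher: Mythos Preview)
Your proposal is correct and follows essentially the same approach as the paper: reduce the point-line cone framework to a bar-joint framework on concentric cylinders by fixing the line, translate the count $3|V_p|+4-|E|=6$ into $2|V_p|-|E_{pp}|=2$ via $|E_{pl}|=|V_p|$, and invoke the concentric-cylinders extension of Theorem~\ref{t:lamancylinder}. Your write-up is considerably more explicit than the paper's about the subgraph inequality and the rigid-motion bookkeeping, but the strategy is identical.
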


\begin{proof}
Infinitesimal rigidity of the point-line distance framework is equivalent to the infinitesimal rigidity of the subframework of points constrained to the concentric cylinders (one for each point) determined by
the point line distances. This, by the concentric cylinders theorem above, is equivalent to
\[ 2|V_p|-|E_{pp}|=2 \]
with a similar inequality for subgraphs. For each point there is a point-line edge, and so  $|V_p|=|E_{pl}|$. Thus,
\[ 3|V_p|+4|V_l|-|E| =(2|V_p|+|V_p|)+4-|E_{pl}|-|E_{pp}| =
2|V_p|+4 -|E_{pp}| =6, \]
with an associated inequality for subgraphs, as desired.
\end{proof}

\bibliographystyle{abbrv}
\def\lfhook#1{\setbox0=\hbox{#1}{\ooalign{\hidewidth
  \lower1.5ex\hbox{'}\hidewidth\crcr\unhbox0}}}

\end{document}